\documentclass[12pt]{amsproc}

\usepackage[T1]{fontenc}
\usepackage[utf8]{inputenc}
\usepackage{amsmath,amssymb,amsthm,mathtools}
\usepackage{enumitem}
\usepackage[backref]{hyperref}
\usepackage[
  a4paper,
  left=27mm,
        right=27mm,
  top=27mm,
  bottom=30mm
]{geometry}

\hypersetup{
  colorlinks=true,
  linkcolor=blue,
  citecolor=blue,
  urlcolor=blue
}

\newtheorem{maintheorem}{Theorem}

\newtheorem{theorem}{Theorem}[section]
\newtheorem{proposition}[theorem]{Proposition}
\newtheorem{lemma}[theorem]{Lemma}
\newtheorem{corollary}[theorem]{Corollary}
\newtheorem{conjecture}[theorem]{Conjecture}
\newtheorem{openproblem}[theorem]{Open problem}

\theoremstyle{definition}
\newtheorem{definition}[theorem]{Definition}

\theoremstyle{remark}
\newtheorem{remark}[theorem]{Remark}

\newcommand{\cU}{\mathcal U}
\newcommand{\cS}{\mathcal S}
\newcommand{\cC}{\mathcal C}
\newcommand{\Sym}{\operatorname{Sym}}
\newcommand{\dist}{\operatorname{dist}}
\newcommand{\Nm}{\operatorname{N}}

\newcommand{\Prob}{\mathbb P}

\title[Centralizers of sofic approximations]{Centralizers of sofic approximations \\ of Kazhdan groups}

\author{Vadim Alekseev}
\address{Vadim Alekseev, TU Dresden, 01062 Dresden, Germany}
\email{vadim.alekseev@tu-dresden.de}

\author{Andreas Thom}
\address{Andreas Thom, TU Dresden, 01062 Dresden, Germany}
\email{andreas.thom@tu-dresden.de}

\date{}

\subjclass[2020]{20F65, 20F69, 20F05, 37A15, 46L10}
\keywords{Sofic groups, Kazhdan groups, universal sofic groups, centralizers,
expander sofic approximations}

\begin{document}

\begin{abstract}
We prove that a Kazhdan group admitting a sofic embedding into a metric ultraproduct of symmetric groups
with a centralizer that acts ergodically on the associated Loeb probability space is locally embeddable in finite groups (LEF). In particular, every finitely presented Kazhdan group admitting such an embedding is residually finite. The main technical theorem says that the centralizer of a sofic embedding of a Kazhdan group is itself a metric ultraproduct of permutation groups.
\end{abstract}

\maketitle

\tableofcontents

\section{Introduction}

The class of sofic groups was introduced by Gromov \cite{GromovSymbolic} in connection with his seminal work on Gottschalk's surjunctivity conjecture and subsequently named and more
systematically studied by Weiss \cite{WeissSofic}. It contains
all amenable groups and all residually finite groups, and no non-sofic group is
presently known. From the viewpoint of analytic group theory, a plausible place
to look for a non-sofic group is among finitely presented, non-residually
finite Kazhdan groups\footnote{We finished writing this paper mid July 2026 and it was circulated among some colleagues. On August 1, 2026, OpenAI has announced a proof of the existence of a non-sofic group, see [OpenAI,
\emph{A counterexample to the soficity conjecture},
in \emph{Ten advances in mathematics and theoretical computer science},
Chapter~3, 77--93, 2026,
\url{https://cdn.openai.com/pdf/ten-proofs-oai.pdf}]. This group can be taken finitely presented and Kazhdan. Since the paper was finished, when this announcement was made, we decided to keep the presentation of this paper unchanged.}. The purpose of this paper is to study sofic approximations of such
groups. Our analysis is ultimately based on Kun's seminal work \cite{Kun} and later
developments based on it, including
\cite{KunThom,AlekseevThom,AlekseevThomApprox}.

Universal sofic groups are metric ultraproducts of finite symmetric groups with
normalized Hamming metrics
$
        \cS_{\mathcal U}:=\prod_{\cU}\Sym(n),
$
where \(\cU\) is a non-principal ultrafilter, see \cite{ElekSzabo}. It is well-known that a countable group is sofic precisely when it embeds
into such a universal sofic group. The metric ultraproduct acts on the associated Loeb
probability space $(X,\mu):=\prod_{\mathcal U} (\{1,\dots,n\},\mu_n)$, where $\mu_n$ denotes the normalized counting measure, and one may therefore study sofic approximations one by one in terms of the ergodic properties of the limit action on $(X,\mu)$. This viewpoint has already been proven fruitful in \cite{GohlaThom}. In particular, we may consider sofic embeddings whose
centralizer inside the universal sofic group has prescribed ergodic properties. Hayes and Kunnawalkam Elayavalli
formulated the following conjecture in their work on non-conjugacy of sofic
approximations \cite{HayesKunnawalkam}.

\begin{conjecture}[Hayes--Kunnawalkam Elayavalli]
Every sofic group \(G\) admits an embedding \(\pi:G\to \cS_{\mathcal U}\)
into a universal sofic group such that \(C_{\cS_{\mathcal U}}(\pi(G))\) acts
ergodically on the associated Loeb probability space.
\end{conjecture}

For a large class of sofic groups the conjectured embedding is known to exist.
Indeed, this is the case for initially subamenable groups by
 work of P\u aunescu \cite{Paunescu,PaunescuConvex}, following Kerr--Li
\cite{KerrLi}, as explained in \cite{HayesKunnawalkam}. The
main result of this paper shows however that for Kazhdan groups,
the additional ergodicity requirement on the centralizer is very restrictive.

\begin{maintheorem}\label{thm:main}
Let \(G\) be a Kazhdan group. Suppose that \(G\) admits a
sofic embedding \(\pi:G\to \cS_{\mathcal U}\) such that the centralizer
\(C_{\cS_{\mathcal U}}(\pi(G))\) acts ergodically on the associated Loeb
probability space. Then \(G\) is locally embeddable into finite groups. In particular, if \(G\) is finitely
presented, then \(G\) is residually finite.
\end{maintheorem}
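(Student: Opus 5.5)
The plan is to route everything through the technical statement announced in the abstract: the centralizer \(C:=C_{\cS_{\mathcal U}}(\pi(G))\) of a sofic embedding of a Kazhdan group is itself a metric ultraproduct of permutation groups. First I will use property (T) in the form of Kun's stability theory \cite{Kun,KunThom}. Fix a finite Kazhdan generating set \(S\) and lift \(\pi\) to approximate actions \(\sigma_n:S\to\Sym(n)\). Then Kun's theorem (in the version of \cite{AlekseevThom,AlekseevThomApprox}) says that, after discarding a set of points of measure \(o(1)\), the Schreier graph of \(\sigma_n\) decomposes into pieces that are uniform expanders. Each piece is almost invariant and spans a definite positive fraction or vanishing mass. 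Grouping pieces by their isomorphism type as \(S\)-labelled graphs (up to \(o(1)\) edits), an element of the centralizer must, up to ultrafilter-null error, permute these pieces. The expander property gives rigidity: an almost-equivariant map between two expander pieces is close to an isomorphism of labelled graphs. Hence \(C\) is identified with \(\prod_{\mathcal U}\prod_i (\operatorname{Aut}(P_{n,i})\wr \Sym(m_{n,i}))\), or similar, which is the main technical theorem. I expect this step, namely proving that centralizing elements are asymptotically block-permutations of expander components and controlling the errors uniformly, to be the main obstacle. I would first prove it for \(\pi\) arising from genuine expander families and then reduce the general case via the quantitative Kun decomposition.

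Second, I will translate ergodicity of \(C\) on \((X,\mu)\). Under the description above, each union of isomorphism classes of pieces gives a \(C\)-invariant internal set. Ergodicity therefore forces, along \(\mathcal U\), one isomorphism class \(P_n\) to carry measure tending to \(1\). The internal \(\sigma\)-algebra generated by \(C\)-invariant internal sets must also be trivial. In particular the number of copies \(m_n\) must tend to infinity unless \(P_n\) itself has small automorphism-orbit structure. Either way, ergodicity gives that \(\operatorname{Aut}(P_n)\wr\Sym(m_n)\) acts almost transitively. In particular \(\operatorname{Aut}(P_n)\) acts almost transitively on the vertices of \(P_n\), which I read as \(P_n\) being close to a vertex-transitive labelled graph.

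Third, I will extract LEF. An \(S\)-labelled graph whose labelled automorphism group acts transitively is a Schreier graph \(\Gamma_n\backslash H_n\), where \(H_n=\operatorname{Aut}\) and \(\Gamma_n\) is a vertex stabilizer. Using the almost-transitivity, I will modify \(\sigma_n\) on a set of measure \(o(1)\) so that the local structure is exactly homogeneous. Then the action of the free group \(F_S\) on \(P_n\) commutes with a transitive group \(A_n\), so \(P_n\cong F_S/K_n\) is a Cayley graph of the finite group \(Q_n=F_S/K_n\) with \(K_n\) normal. This holds because a transitive action whose centralizer is transitive is regular modulo the kernel. Soficity of \(\pi\) means balls of radius \(r\) in \(\sigma_n\) look like balls in the Cayley graph of \(G\) for most points. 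Homogeneity then upgrades this to all points, so the \(r\)-ball of \(Q_n\) agrees with that of \(G\) for \(n\) in a \(\mathcal U\)-large set. This yields a local embedding of \(G\) into finite groups, i.e.\ LEF. For finitely presented \(G\), LEF implies residual finiteness by the standard argument: take \(r\) larger than the relator lengths, so that the map to \(Q_n\) is a homomorphism injective on the \(r\)-ball.
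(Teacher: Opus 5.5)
\textbf{There is a genuine gap, and it is in your first step.} You invoke a rigidity statement: an almost-equivariant map between expander pieces is close to an isomorphism of labelled graphs, so that $C=\prod_{\cU}\prod_i \operatorname{Aut}(P_{n,i})\wr\Sym(m_{n,i})$. This is false, already for genuine expander families.

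Take the left-multiplication Cayley graphs of finite quotients $Q_n$ of $\mathrm{SL}_3(\mathbb Z)$. Compose one generator with a single transposition that joins two of its cycles. This changes nothing in $\cS_{\cU}$, so the centralizer still contains $\prod_{\cU}Q_n$ acting by right multiplication, and that action is ergodic. However, every labelled automorphism of the perturbed graph must preserve the now unique longest cycle of that generator. Automorphisms of a connected labelled graph act freely. Hence $|\operatorname{Aut}(P_n)|$ is at most twice the order of the generator, which is $o(|Q_n|)$. So right multiplication by most $q\in Q_n$ is almost equivariant yet far from every automorphism, and $\operatorname{Aut}(P_n)$ has orbits of vanishing measure.

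Expansion only gives a distance gap: two almost-equivariant maps either almost agree or almost disagree (Lemma~\ref{lem:cluster-scales}(ii)). It does not give exact isomorphisms. Moreover, ``isomorphic up to $o(1)$ edits'' is not transitive with uniform control unless you add a repair step, since errors accumulate under composition. Your second and third steps use exact transitivity of $\operatorname{Aut}(P_n)$ to present $P_n$ as a Cayley graph of $F_S/K_n$, so they inherit the gap. The sentence ``modify $\sigma_n$ on a set of measure $o(1)$ so that the local structure is exactly homogeneous'' is precisely what has to be proved.

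\textbf{How the paper supplies the missing ingredients.} Almost-equivariant partial bijections between expander components, taken modulo the distance gap, are the arrows of a finite groupoid $\cC_n$. Composition is made well defined by the Kun--Thom repair (Proposition~\ref{prop:two-component-repair}). The full group $[[\cC_n]]$ acts only approximately on $X_n$, but these approximate actions exhaust the centralizer (Proposition~\ref{prop:metric-groupoid-recovery}). Becker--Chapman's uniform flexible stability then gives genuine actions on slightly larger sets $Y_n\supseteq X_n$. This is the inessential change of model after which $C=\prod_{\cU}A_n$ with honest finite groups $A_n$; these are not automorphism groups of the given graphs.

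Even then, $\alpha_n(s)$ only asymptotically commutes with $A_n$, so the endgame uses the majority Lemma~\ref{lem:majority}. After passing to a conull $A_n$-orbit, $\pi(G)$ lies in $\prod_{\cU}\Nm_{A_n}(L_n)/L_n$. The factors of this ultraproduct act freely, so the metric ultraproduct is discrete, and $G$ embeds in an algebraic ultraproduct of finite groups.

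Your Cayley-graph endgame would be a valid alternative to this last step if exact commutation were available. In that case no modification would even be needed, since an $\operatorname{Aut}$-orbit containing more than half the vertices is $F_S$-invariant. But producing that exact structure is exactly what the groupoid and stability machinery is for.
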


The proof is organized as follows. As a preliminary step, we first compute almost centralizers of
finite transitive actions by a majority argument. We then show that if the
centralizer of a sofic embedding is isomorphic to a metric ultraproduct of
finite permutation groups and acts ergodically, then finite generation forces the group to be
locally embeddable into finite groups (LEF). If the group is finitely presented, this implies that the group is residually finite. 
The technical
heart of the proof is the description of the centralizer of the sofic embedding in the universal sofic group in terms of the finite-level approximate centralizers. Indeed, Theorem \ref{thm:groupoid-recovery} says that after inessential changes of the finite models, the centralizer of the sofic approximation  $(\theta_n \colon \Gamma \to \Sym(X_n))_n$ is a metric ultraproduct of finite permutation groups $A_n \subset \Sym(X_n)$.
This is achieved as a groupoid version of the results obtained by Kun and the second author \cite{KunThom}. Indeed, after
Kun's expander decomposition, a repair argument gives a well-defined composition for
the finite cluster groupoids of almost equivariant partial bijections, and the
Becker--Chapman stability theorem \cite{BeckerChapman} corrects the approximate actions to genuine actions of the full groups of those groupoids. Proposition~\ref{prop:algebraic-recovery-rf} then converts
this centralizer structure into an LEF approximation.
These ingredients prove Theorem~\ref{thm:main}.

\section{Definitions and preliminaries}\label{sec:prelim}

\subsection{Definitions}

All finite sets are equipped with normalized counting measure. For permutations of a finite set \(X\), the
normalized Hamming distance is
\[
        d_H(\sigma,\tau)=\frac{|\{x\in X:\sigma x\neq\tau x\}|}{|X|}.
\]
Along an ultrafilter \(\cU\), we write \(a_n\to_{\cU}a\) for a sequence $(a_n)$ of real numbers and $a \in \mathbb R$ for the usual notion of convergence along the ultrafilter.

\begin{definition}
Let \(\Lambda\) be a finitely generated group. A sequence of maps
\(\theta_n:\Lambda\to\Sym(W_n)\) for finite sets \(W_n\) is a \emph{sofic approximation} if, for all
\[
        d_H(\theta_n(\lambda\mu),\theta_n(\lambda)\theta_n(\mu))\to0,
\]
and, for every \(\lambda\neq e\),
$
        d_H(\theta_n(\lambda),1)\to1.
$
\end{definition}

It is frequently convenient to represent a sofic approximation of a finitely generated group \(\Lambda\) by a sequence of homomorphisms from a fixed finitely generated free group \(F\) that maps onto \(\Lambda\). Let \(S=S^{-1}\) be a finite generating set of \(\Lambda\), and let \(F=F_S\) be the free group on the alphabet \(S\). Then a sofic approximation of \(\Lambda\) is represented by a sequence of homomorphisms \(\theta'_n :F\to\Sym(W_n)\) such that, for every \(w\in F\),
\[
        \lim_{n \to \infty} d_H(\theta'_n(w),1)=
        \begin{cases}
        0 & \text{if } w \in \ker(F \to \Lambda),\\
        1 & \text{otherwise}.
        \end{cases}
\]

We recall that a finitely generated group \(G\) is \emph{locally embeddable
into finite groups} (LEF) if, for every finite set \(E\subseteq G\), there are
a finite group \(K\) and an injective map \(\theta:E\to K\) such that
\(\theta(xy)=\theta(x)\theta(y)\) whenever \(x,y,xy\in E\). It is easy to see that a finitely generated LEF group is sofic.

\begin{definition}
Let \(T=T^{-1}\) be finite and suppose that permutations \(\theta(t)\),
\(t\in T\), of a finite set \(W\) are given. For \(A\subseteq W\), the
\emph{edge boundary} is
\[
        \partial_T A
        =\bigl\{(x,t)\in A\times T:\theta(t)x\notin A\bigr\},
\]
and \((W,\theta)\) is an \emph{\(h\)-expander}, for \(h>0\),
if
$$|\partial_T A|\ge h\min\bigl(|A|,|W\setminus A|\bigr)$$ for every 
\(A\subseteq W\).  Let $\Lambda$ be a finitely generated group generated by a finite set $S$. A sofic approximation \(\theta_n:\Lambda\to\Sym(W_n)\) is an \emph{expander
sofic approximation}, with expander constant \(h>0\), if every \((W_n,\theta_n(S)   )  \) is an \(h\)-expander. 
\end{definition}

We record the two elementary reformulations used below; both are immediate
from the definitions and hold for every \(A\subseteq W\). With normalized
counting measure \(\nu\), the symmetrized generator count is twice the edge
boundary,
\begin{equation}
        \sum_{t\in T}\nu\{x:\mathbf 1_A(\theta(t)x)\neq\mathbf 1_A(x)\}
        =\frac{2\,|\partial_T A|}{|W|},
        \label{eq:boundary-symmetrize}
\end{equation}
because each pair \((x,t)\) with \(\theta(t)x\notin A\), \(x\in A\), is matched
by the pair \((\theta(t)x,t^{-1})\) with \(t^{-1}\)-neighbour in \(A\). In
particular, an \(h\)-expander satisfies the symmetrized bound
\[
        \sum_{t\in T}\nu\{x:\mathbf 1_A(\theta(t)x)\neq\mathbf 1_A(x)\}
        \ge 2h\min\bigl(\nu(A),1-\nu(A)\bigr).
\]

We will frequently compare sofic approximations and make negligible modifications on both the permutations involved and the ground sets. The following definition formalizes these notions:

\begin{definition}
\label{def:essentially}
Two sofic approximations of a finitely generated group $\Lambda$ on \(X_n\) and \(X'_n\) are \emph{essentially equivalent} if
there are subsets \(U_n\subseteq X_n\) and \(U'_n\subseteq X'_n\), with
\(|X_n\setminus U_n|=o(|X_n|)\) and
\(|X'_n\setminus U'_n|=o(|X'_n|)\), and bijections \(U_n\to U'_n\) under which
all generator maps agree.
Along \(\cU\) we use the same convention with \(o(1)\) replaced by convergence
to zero along \(\cU\). If a sofic approximation has a property after replacement by an
essentially equivalent model, we say that it has that property \emph{essentially}.
\end{definition}
The following lemma is standard:
\begin{lemma}
\label{lem:null-modification}
Let \(\Lambda\) be generated by a finite symmetric set \(R\), and let
\(\theta_n:F_R\to\Sym(X_n)\) and \(\theta'_n:F_R\to\Sym(X'_n)\) be two sofic approximations of $\Lambda$ which are essentially equivalent. Then the following holds:
\begin{enumerate}[label=\textup{(\roman*)}]
\item The induced metric ultraproducts and Loeb spaces are canonically isomorphic modulo null sets, and the actions are intertwined by this identification. 
\item The induced sofic embeddings are canonically conjugate. 
\item Under this identification, centralizers in the metric
ultraproduct are identified and ergodicity of their actions on the Loeb space are preserved.
\end{enumerate}
\end{lemma}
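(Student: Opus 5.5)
The plan is to build a measure-preserving partial bijection at each finite level and pass it to the ultraproduct. Fix bijections \(\phi_n:U_n\to U'_n\) as in Definition~\ref{def:essentially}. Since \(|X_n|\) and \(|X'_n|\) both differ from \(|U_n|=|U'_n|\) by \(o\) of themselves, the ratio \(|X_n|/|X'_n|\) tends to \(1\). Hence \(\phi_n\) is, up to a multiplicative error tending to \(1\), measure preserving for the normalized counting measures. The induced map \(\Phi=[\phi_n]_{\cU}\) between the internal sets \(\prod_{\cU}U_n\subseteq X\) and \(\prod_{\cU}U'_n\subseteq X'\) is then a bijection of full-measure subsets of the Loeb spaces, and it preserves Loeb measure. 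This is checked on internal sets, where \(\mu(\prod_\cU A_n)=\lim_\cU |A_n|/|X_n|\) and \(|\phi_n(A_n\cap U_n)|/|X'_n|\) has the same \(\cU\)-limit. It then extends to the generated \(\sigma\)-algebra by Carathéodory uniqueness.

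To obtain part (i), I would next extend each \(\phi_n\) arbitrarily to a bijection \(\tilde\phi_n:X_n\to X'_n\), after first padding the smaller set with a null set so that the cardinalities agree. Padding and arbitrary extension change things only on sets of \(\cU\)-vanishing measure. Conjugation by \(\tilde\phi_n\) maps \(\Sym(X_n)\) onto \(\Sym(X'_n)\) and distorts normalized Hamming distance by at most \(o(1)\), so it induces an isometric isomorphism \(\Psi:\prod_\cU\Sym(X_n)\to\prod_\cU\Sym(X'_n)\). This isomorphism does not depend on the chosen extension, because two extensions agree on \(U_n\). By construction \(\Phi(gx)=\Psi(g)\Phi(x)\) for almost every \(x\), which is the required intertwining.

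For part (ii), the generators satisfy \(\theta'_n(r)\phi_n=\phi_n\theta_n(r)\) on \(U_n\cap\theta_n(r)^{-1}U_n\), and the complement of this set is \(o(|X_n|)\). Therefore \(\Psi([\theta_n(r)])=[\theta'_n(r)]\) for each \(r\in R\). Since both sides define homomorphisms of \(F_R\), equality on \(R\) gives \(\Psi\circ\pi=\pi'\). Both factor through \(\Lambda\), so the induced sofic embeddings are conjugate via \(\Psi\).

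Part (iii) then follows formally. Because \(\Psi\) is a group isomorphism carrying \(\pi(\Lambda)\) onto \(\pi'(\Lambda)\), it carries the centralizer of \(\pi(\Lambda)\) onto the centralizer of \(\pi'(\Lambda)\). Because \(\Phi\) intertwines the actions and is a measure isomorphism modulo null sets, a set \(B\subseteq X'\) is invariant under the centralizer up to null sets exactly when \(\Phi^{-1}(B)\) is. Ergodicity is therefore preserved.

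The main obstacle I expect is the measure-theoretic bookkeeping. I need to check that ``modulo null sets'' is used consistently, meaning that the action of the metric ultraproduct on the Loeb space is defined only up to null sets, and that \(\Phi\) is defined only on a conull set. I also need to confirm that null sets in the Loeb \(\sigma\)-algebra are handled correctly. I would settle this by noting that every Loeb-measurable set agrees up to a null set with an internal set, which reduces every claim to internal sets. For internal sets the estimates are the finite-level \(o(1)\) bounds above.
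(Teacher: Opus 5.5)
Your argument is correct and is the standard one. The paper gives no proof of this lemma (it only calls it standard), so there is no different route to compare against. The one step you assert rather than prove is that padding the smaller set by a set \(P_n\) with \(|P_n|=o(|X_n|)\) leaves the metric ultraproduct unchanged. Extension by the identity gives an isometric embedding \(\prod_{\cU}\Sym(X_n)\to\prod_{\cU}\Sym(X_n\sqcup P_n)\). Its surjectivity follows by restricting a given \(\sigma_n\) to \(X_n\cap\sigma_n^{-1}(X_n)\) and completing this partial bijection of \(X_n\), which has equal-sized source and range complements. This changes \(\sigma_n\) on at most \(2|P_n|\) points, exactly as in the proof of Proposition~\ref{prop:exactification}.
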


We use the following form of Kun's expander decomposition theorem
\cite{Kun}; see also the discussion in \cite{KunThom}.

\begin{theorem}[Kun]\label{thm:kun}
Let \(K\) be a Kazhdan group with finite symmetric
generating set \(R\). There is a constant \(h_K>0\), depending only on \(K\)
and \(R\), with the following property. For every sofic approximation
\(\theta_n:F_R\to\Sym(W_n)\) of \(K\) there are homomorphisms
\(\theta_n^\sharp:F_R\to\Sym(W_n)\) and \(\theta_n^\sharp\)-invariant subsets
\(W'_n\subseteq W_n\) such that:
\begin{enumerate}[label=\textup{(\roman*)}]
\item \(d_H(\theta_n^\sharp(r),\theta_n(r))\to0\) for every \(r\in R\);
\item \(|W_n\setminus W'_n|=o(|W_n|)\), and every point of
\(W_n\setminus W'_n\) is fixed by \(\theta_n^\sharp(r)\) for every \(r\in R\);
\item every connected component of the \(R\)-labelled graph of
\(\theta_n^\sharp\) on \(W'_n\) has Cheeger constant at least \(h_K\).
\end{enumerate}
In particular, every connected component of the \(R\)-labelled graph of
\(\theta_n^\sharp\) on all of \(W_n\) is either a singleton fixed by all
generators or has Cheeger constant at least \(h_K\), and
\(\theta_n^\sharp\) is essentially equivalent to \(\theta_n\).
\end{theorem}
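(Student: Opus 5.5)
We prove Kun's theorem (Theorem~\ref{thm:kun}) by combining Kazhdan's property (T) with an iterative decomposition of the Schreier graphs. We expect the decomposition step to be the main obstacle.

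\emph{Step 1: from approximation to genuine actions.} We first pass from the sofic approximation to genuine homomorphisms of $F_R$ that nearly satisfy the relations of $K$. Since $\theta_n$ already comes from homomorphisms of $F_R$, each relator $w$ of $K$ satisfies $d_H(\theta_n(w),1)\to0$. The point set decomposes into $R$-labelled components. We call a component \emph{bad} if the relator defect inside it is large, and \emph{good} otherwise.

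\emph{Step 2: spectral gap on relation-consistent regions.} Kazhdan's property (T) for $K$ with respect to $R$ gives a Kazhdan constant $\kappa>0$. We plan to use its robust, Shalom-type form for finitely presented approximations. Fix a finite set of relators of $K$; for finitely generated Kazhdan groups, property (T) holds for some finitely presented cover, so we may work with that cover. The robust form then says the following: if an $\varepsilon$-defect action of $F_R$ on $\ell^2(W)$ has an $(R,\delta)$-invariant unit vector, then that vector is close to a genuinely invariant one. Applied to indicator-type vectors $\mathbf 1_A-\nu(A)$, this shows that any set $A$ with small edge boundary, inside a region of small defect, must be close to a union of components.

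\emph{Step 3: Kun's iterative cutting.} Inside each component, as long as some subset of measure at most one half inside that component has boundary ratio below $h_K$, we cut along it. The threshold $h_K$ is chosen as a function of $\kappa$ and $|R|$, for instance $h_K=\kappa^2/(4|R|)$. Each cut costs at most $h_K|A|$ edges, and we charge the cost to the smaller side. Since every point can lie on the smaller side at most $\log_2|W_n|$ times, this naive count gives a total cost of $h_K|W_n|\log|W_n|$, which is too large.

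This logarithmic loss is the genuine difficulty, and Kun's key input is to avoid it. The plan is to use Step~2 to show that a sparse cut in a region with small defect must be nearly a union of previous pieces. Then only a small fraction of edges is cut at each scale, and the costs sum geometrically, giving a total of $o(|W_n|)$ edges cut. The removed edges are then reconnected inside the pieces: on each final piece we redefine the permutations $\theta_n(r)$ on the $o(|W_n|)$ endpoints of cut edges so that they become bijections of that piece. Pieces whose relator defect is still large, or which are too small to be expanders, are discarded into $W_n\setminus W'_n$ and made fixed by all generators. Their total measure is $o(1)$ because the overall defect tends to $0$, which gives~(ii).

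\emph{Step 4: checking the conclusions.} Conclusion~(i) follows because only $o(|W_n|)$ points were modified. Conclusion~(iii) holds by the stopping rule, since no remaining sparse cut exists in any kept component. The final statement follows because points outside $W'_n$ are fixed singletons, and because the identity on the common unmodified set of measure $1-o(1)$ realizes the essential equivalence of Definition~\ref{def:essentially}. We expect the geometric bound on the total cost of cutting in Step~3 to be the hardest step.
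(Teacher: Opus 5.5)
This theorem is not proved in the paper; it is imported from Kun \cite{Kun} (cf.\ \cite{KunThom}). Your outline correctly locates the difficulty, namely the \(\log|W_n|\) loss of naive iterated cutting. It nevertheless has a genuine gap: the tool in Step~2 is false as stated, and the accounting in Step~3 that should remove the loss is asserted, not proved.

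\emph{Step~2.} For an \(\varepsilon\)-defect action, an \((R,\delta)\)-invariant unit vector need not be close to an invariant vector of that action. Likewise, a set with small boundary need not be close to a union of components. Here is a counterexample. Take two large connected models \(W^1,W^2\) of equal size. Switch one pair of corresponding \(s\)-edges between them, setting \(\theta(s)x_1=y_2\) and \(\theta(s)x_2=y_1\) instead of \(y_1,y_2\), so that \(W=W^1\sqcup W^2\) becomes connected.
\begin{itemize}
\item The switch adds relator defect only \(O(1/|W|)\).
\item The vector \(v=(\mathbf 1_{W^1}-\mathbf 1_{W^2})/\sqrt{|W|}\) is a unit vector with \(\|\theta(s)v-v\|=O(|W|^{-1/2})\) for every \(s\).
\item Yet \(v\) is orthogonal to all invariant vectors, which are the constants.
\end{itemize}
Moreover, an \(\ell^2\) estimate with additive defect error says nothing about pieces whose measure is comparable to the defect, while the decomposition must control pieces of all sizes.

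What (T) actually gives in this setting is the localized improvement lemma. The paper quotes it from \cite[Proposition~3.3]{KunThom} in the proof of Proposition~\ref{prop:two-component-repair}: for every \(\alpha>0\) there is \(r\) such that every \(T\) whose points all have correct \(r\)-balls lies within \(c|\partial_S T|\) of some \(U\) with \(|\partial_S U|\le\alpha|U|\). The target is a set of small \emph{relative} boundary, not an invariant set. The error is measured against \(|\partial_S T|\), not against \(|W|\).

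\emph{Step~3.} Even granting that lemma, your key claim is neither justified nor well posed. You assert that a sparse cut is ``nearly a union of previous pieces'', so that few edges are cut per scale and the costs decay geometrically. But at the first cut there are no previous pieces, ``scale'' is undefined, and nothing bounds the number of scales or forces geometric decay. Improving each \(h_K\)-sparse cut to an \(\alpha_n\)-sparse one only gives the bound \(\alpha_n|W_n|\log_2|W_n|\). Here \(\alpha_n\) tends to \(0\) only as fast as the radius of local correctness of the approximation allows, and no relation to \(\log|W_n|\) can be assumed. So the \(o(|W_n|)\) bound on modified points is not established, and with it neither (i) nor (ii). This missing estimate, total cut cost \(o(|W_n|)\) independent of the depth of the cutting hierarchy, is precisely the content of Kun's theorem.

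The surrounding bookkeeping is fine. Rewiring cut edges inside a piece can only enlarge boundaries compared to the induced graph, so your stopping rule does give (iii). Discarded points are fixed by all generators. The identity on unmodified points gives the essential equivalence.
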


\subsection{Uniform almost centralizers of transitive actions}\label{sec:majority}

If a finite group \(H\) acts on \(X\), we write \(C_{\Sym(X)}(H)\) for the exact centralizer of the image of
\(H\) in \(\Sym(X)\). The following lemma shows that the centralizer of a transitive group action is rigid with respect to the normalized Hamming metric.
\begin{lemma}\label{lem:majority}
Let \(H\) be a finite group and let \(L\leq H\).  Put \(X=H/L\), with
\(H\) acting on \(X\) by left translations.  Let \(\sigma\in \Sym(X)\) and
assume that \(\sup_{h\in H} d_H(\sigma h,h\sigma)\leq \varepsilon\).
If \(\varepsilon<1/2\), then there exists \(a\in \Nm_H(L)\) such that
\(d_H\bigl(\sigma,\rho(aL)\bigr)\leq \varepsilon\), where
\(\rho(aL)(gL)=gaL\) is the corresponding element of
\(C_{\Sym(H/L)}(H)\cong \Nm_H(L)/L\).  In particular,
\(\dist\bigl(\sigma,C_{\Sym(H/L)}(H)\bigr)\leq \varepsilon\).
\end{lemma}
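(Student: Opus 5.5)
The plan is a majority (voting) argument over the group \(H\). Write \(x_0=L\in X\) for the base coset. For each \(h\in H\) we "pull back" the value of \(\sigma\) at the point \(hx_0\) to a candidate value at \(x_0\): put
\[
        c(h):=h^{-1}\sigma(hx_0)\in X .
\]
If \(\sigma\) commuted exactly with \(H\), then \(c(h)=\sigma(x_0)\) for every \(h\). The first step is to show that \(c(h)\) is constant on a large set. Summing over \(h\in H\),
\[
        \frac1{|H|}\sum_{h\in H}\bigl|\{x: \sigma hx\neq h\sigma x\}\bigr|/|X|\le\varepsilon .
\]
Since \(H\) acts transitively and the uniform measure on \(H\) pushes forward to the uniform measure on \(X\) under \(g\mapsto gx_0\), this bound can be rewritten as a bound on pairs. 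Concretely, the proportion of pairs \((g,h)\in H\times H\) with \(\sigma(hgx_0)\neq h\sigma(gx_0)\) is at most \(\varepsilon\). Substituting \(h'=hg\), this is exactly the proportion of pairs \((g,h')\) with \(c(h')\neq c(g)\).

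The second step turns this pair estimate into a majority value. Let \(p_y\) be the fraction of \(h\in H\) with \(c(h)=y\). The pair estimate says \(\sum_y p_y^2\ge 1-\varepsilon>1/2\). Because \(\max_y p_y\ge\sum_y p_y^2\), there is a \(y_0=aL\) with \(p_{y_0}\ge 1-\varepsilon>1/2\). I then define \(\rho\) by \(\rho(gx_0)=g y_0=gaL\). It remains to check that \(\rho\) is well defined, i.e.\ that \(a\in\Nm_H(L)\). This follows from the strict majority. The set \(M=\{h: c(h)=aL\}\) has more than half of \(H\). For \(l\in L\), the set \(Ml\) also has more than half of \(H\), and \(c(hl)=l^{-1}h^{-1}\sigma(hx_0)=l^{-1}c(h)\) because \(hlx_0=hx_0\). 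Hence \(M\cap Ml\neq\emptyset\), which gives \(aL=l^{-1}aL\) for every \(l\in L\). So \(a^{-1}La\subseteq L\), i.e.\ \(a\in\Nm_H(L)\) by finiteness. The map \(\rho\) is then the right translation by \(a\), which commutes with \(H\).

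The final step is the distance estimate. We have \(\sigma(hx_0)=h\,c(h)\) and \(\rho(hx_0)=haL\). These agree exactly when \(c(h)=aL\). Pushing the uniform measure on \(H\) forward to \(X\), we get \(d_H(\sigma,\rho)=1-p_{y_0}\le\varepsilon\). The claim \(\dist(\sigma,C_{\Sym(H/L)}(H))\le\varepsilon\) follows immediately.

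The only delicate points are the averaging identity in the first step, namely checking that the uniform-in-\(h\) hypothesis averages to the pair count without loss, and the well-definedness of \(\rho\), where the hypothesis \(\varepsilon<1/2\) is genuinely used. The inequality \(\max p\ge\sum p^2\) is the key trick that avoids any loss of a factor of \(2\).
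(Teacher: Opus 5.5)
Your proposal is correct and follows essentially the same route as the paper: the same function $c(g)=g^{-1}\sigma(go)$, the same averaging of the uniform hypothesis into a pair estimate via $h'=hg$, the same inequality $\sum_y p_y^2\le\max_y p_y$, and the same strict-majority argument for $a\in\Nm_H(L)$. Your observation that $M\cap Ml\neq\emptyset$ is just a rephrasing of the paper's point that the $L$-invariant distribution of $c$ has a unique atom of mass greater than $1/2$.
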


\begin{proof}
Let \(o=L\in H/L\).  For \(g\in H\), define
\(c(g)=g^{-1}\sigma(go)\in H/L\).  For \(h,g\in H\), the equality
\(\sigma(hgo)=h\sigma(go)\) is equivalent to \(c(hg)=c(g)\).  Indeed,
\(c(hg)=(hg)^{-1}\sigma(hgo)=g^{-1}h^{-1}\sigma(hgo)\), and this equals
\(g^{-1}\sigma(go)=c(g)\) precisely when
\(\sigma(hgo)=h\sigma(go)\).

Hence the assumption gives, for every fixed
\(h\in H\), that
$$|\{g\in H:c(hg)\neq c(g)\}|/|H|\leq \varepsilon.$$
Averaging over \(h\) and using that, for every pair \((g,k)\in H^2\), there
is a unique \(h=kg^{-1}\) with \(k=hg\), we obtain
\(\Prob_{g,k\in H}(c(k)\neq c(g))\leq \varepsilon\), or equivalently
\(\Prob_{g,k\in H}(c(k)=c(g))\geq 1-\varepsilon\).  If
\(p_y=|\{g\in H:c(g)=y\}|/|H|\) for \(y\in H/L\), then
\(\sum_{y\in H/L} p_y^2\geq 1-\varepsilon\).
Since \(\sum_y p_y^2\leq \max_y p_y\), there exists \(aL\in H/L\) such
that \(p_{aL}\geq 1-\varepsilon\).  Thus \(c(g)=aL\) for all but at most
\(\varepsilon |H|\) elements \(g\in H\).

It remains to prove that \(a\) normalizes \(L\).  For \(\ell\in L\), since
\(\ell o=o\), one has
$$c(g\ell)=(g\ell)^{-1}\sigma(g\ell o)
=\ell^{-1}g^{-1}\sigma(go)=\ell^{-1}c(g).$$
Right multiplication by \(\ell\) preserves the uniform measure on \(H\), so
the distribution of \(c(g)\) is invariant under the left action of \(L\) on
\(H/L\).  Since \(p_{aL}>1/2\), the atom \(aL\) is the unique atom of mass
larger than \(1/2\).  Therefore \(\ell^{-1}aL=aL\) for every \(\ell\in L\),
that is, \(La\subseteq aL\), so \(a^{-1}La\subseteq L\), and since \(L\) is
finite this forces \(a^{-1}La=L\), i.e.\ \(a\in \Nm_H(L)\).

For this \(a\), the map \(\rho(aL)\) is well-defined and centralizes the left
action of \(H\).  Moreover, whenever \(c(g)=aL\), we have
\(\sigma(gL)=gaL=\rho(aL)(gL)\).
Since the condition \(c(g)=aL\) is constant on right cosets of \(L\), the
same estimate holds on \(H/L\).  Hence
\(d_H\bigl(\sigma,\rho(aL)\bigr)\leq \varepsilon\).
\end{proof}

\begin{proposition}\label{prop:centralizer-transitive}
Let \((H_n)_n\leq \Sym(X_n)\) be a sequence of finite permutation groups acting transitively on \(X_n\cong H_n/L_n\).  Let \(\cU\) be a nonprincipal ultrafilter.  Then, inside
\(\prod_{\cU}\Sym(X_n)\), one has
\[
        C\left(\prod_{\cU}H_n\right)
        =
        \prod_{\cU} C_{\Sym(X_n)}(H_n)
        =
        \prod_{\cU}\Nm_{H_n}(L_n)/L_n .
\]
All ultraproducts in this display are metric ultraproducts for the normalized
Hamming metrics induced by actions on \(X_n\).
\end{proposition}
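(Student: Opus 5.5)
The plan is to prove the two equalities separately. The second one is a finite-level identity applied coordinatewise. The first one comes from two inclusions, with Lemma~\ref{lem:majority} doing the real work.

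\emph{Second equality.} For a transitive action \(X_n\cong H_n/L_n\), the standard computation gives \(C_{\Sym(X_n)}(H_n)\cong \Nm_{H_n}(L_n)/L_n\). A centralizing permutation is determined by the image of the base point \(o=L_n\). That image must be fixed by \(L_n\), which forces it to be \(aL_n\) with \(a\in\Nm_{H_n}(L_n)\). The corresponding permutation is \(\rho(aL_n)\colon gL_n\mapsto gaL_n\). The map \(aL_n\mapsto\rho(aL_n)\) is an anti-isomorphism, and composing with inversion turns it into an isomorphism. This identification respects the embedding into \(\Sym(X_n)\), so the Hamming metrics agree, and taking metric ultraproducts gives the second equality.

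\emph{The inclusion \(\supseteq\).} Take \(\sigma=[\sigma_n]\) with \(\sigma_n\in C_{\Sym(X_n)}(H_n)\), and take \(h=[h_n]\in\prod_\cU H_n\). Then \(\sigma_n h_n=h_n\sigma_n\) exactly in every coordinate, so \(\sigma h=h\sigma\) in the ultraproduct.

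\emph{The inclusion \(\subseteq\).} Let \(\sigma=[\sigma_n]\in\prod_\cU\Sym(X_n)\) commute with every element of \(\prod_\cU H_n\). Put
\[
\varepsilon_n:=\sup_{h\in H_n} d_H(\sigma_n h,h\sigma_n),
\]
where the supremum is a maximum because \(H_n\) is finite. The main step is to show \(\varepsilon_n\to_\cU 0\), and I would argue by contradiction. Suppose \(\varepsilon_n\ge\delta>0\) on a set in \(\cU\). For each such \(n\), choose \(h_n\in H_n\) attaining the maximum, and choose \(h_n\) arbitrarily otherwise. Then \(h=[h_n]\in\prod_\cU H_n\) satisfies \(d(\sigma h,h\sigma)\ge\delta\), contradicting \(\sigma h=h\sigma\). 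This diagonal choice of a single sequence \((h_n)_n\) is what upgrades commuting with each element of the ultraproduct to the uniform almost-commutation hypothesis of Lemma~\ref{lem:majority}. It is the one point that needs care, and it works because the centralizer is taken with respect to the whole ultraproduct group, not just a countable subgroup.

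Given \(\varepsilon_n\to_\cU 0\), we have \(\varepsilon_n<1/2\) on a set in \(\cU\). For those \(n\), Lemma~\ref{lem:majority} produces \(a_n\in\Nm_{H_n}(L_n)\) with \(d_H(\sigma_n,\rho(a_nL_n))\le\varepsilon_n\). For the remaining \(n\), set \(\tau_n=1\); otherwise set \(\tau_n=\rho(a_nL_n)\). Then \(d_H(\sigma_n,\tau_n)\to_\cU 0\), so \(\sigma=[\tau_n]\in\prod_\cU C_{\Sym(X_n)}(H_n)\). This finishes the proof.
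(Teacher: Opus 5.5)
Your proof is correct and follows essentially the same route as the paper. Both arguments use the trivial inclusion, then the diagonal choice of $h_n$ to upgrade commutation with $\prod_{\cU}H_n$ to $\sup_{h\in H_n} d_H(\sigma_n h,h\sigma_n)\to_{\cU}0$, and then Lemma~\ref{lem:majority} to replace $\sigma_n$ by an exact centralizing permutation. You additionally spell out two points the paper leaves implicit: the identification $C_{\Sym(X_n)}(H_n)\cong \Nm_{H_n}(L_n)/L_n$, and the treatment of the $\cU$-small set of indices where $\varepsilon_n\ge 1/2$.
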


\begin{proof}
The inclusion from right to left is immediate.  Conversely, let
\(\sigma=[\sigma_n]_{\cU}\in \prod_{\cU}\Sym(X_n)\) commute with every
element of \(\prod_{\cU}H_n\).  We claim that
\(\sup_{h\in H_n}d_H(\sigma_n h,h\sigma_n)\to_{\cU}0\).
If not, then for some \(\delta>0\), on a \(\cU\)-large set one can choose
\(h_n\in H_n\) such that \(d_H(\sigma_nh_n,h_n\sigma_n)>\delta\).
Then \([h_n]_{\cU}\in \prod_{\cU}H_n\) does not commute with
\([\sigma_n]_{\cU}\), a contradiction.

Lemma~\ref{lem:majority} gives \(b_n\in C_{\Sym(X_n)}(H_n)\) such that
\(d_H(\sigma_n,b_n)\to_{\cU}0\).
Thus \([\sigma_n]_{\cU}=[b_n]_{\cU}\), proving the reverse inclusion.
\end{proof}

\subsection{Residual finiteness from the centralizer structure}\label{sec:abstract}

Let \(\pi\colon G\to \prod_{\cU}\Sym(X_n)\) be a sofic embedding.  We say that the
centralizer of \(\pi(G)\) is \emph{rigid} if there are  subgroups \(A_n\leq \Sym(X_n)\) such that
\(C(\pi(G))=\prod_{\cU}A_n\) inside \(\prod_{\cU}\Sym(X_n)\), where this is a
metric ultraproduct with the normalized Hamming metrics inherited from the
actions on \(X_n\).

\begin{lemma}\label{lem:conull-orbit}
Assume that
\(A=\prod_{\cU}A_n\leq \prod_{\cU}\Sym(X_n)\) acts ergodically on the Loeb
probability space of \((X_n)_n\).  Then, after passing to a \(\cU\)-large set
of indices, there are \(A_n\)-orbits \(O_n\subseteq X_n\) such that
\(|O_n|/|X_n|\to_{\cU}1\).
\end{lemma}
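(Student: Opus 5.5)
We argue by contradiction. Suppose that for some \(\delta>0\) the set of indices \(n\) for which every \(A_n\)-orbit in \(X_n\) has normalized measure at most \(1-\delta\) is \(\cU\)-large. On this set we split \(X_n\) into two \(A_n\)-invariant pieces of comparable size. List the \(A_n\)-orbits \(O_{n,1},O_{n,2},\dots\) in decreasing order of size and add them one at a time until the accumulated measure first reaches \(\delta/2\).

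\emph{Case 1: the largest orbit already has measure at least \(\delta/2\).} Take \(B_n=O_{n,1}\). Its measure then lies in \([\delta/2,1-\delta]\).

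\emph{Case 2: all orbits have measure below \(\delta/2\).} Take \(B_n\) to be the greedy union. Its measure lies in \([\delta/2,\delta)\), because each added orbit contributes less than \(\delta/2\).

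In both cases \(\nu_n(B_n)\in[\delta/2,1-\delta/2]\), where we use \(\delta\le 1/2\) without loss of generality. By construction \(B_n\) is a union of \(A_n\)-orbits, so \(aB_n=B_n\) exactly for every \(a\in A_n\).

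Next we form the internal set \(B=\prod_{\cU}B_n\subseteq X\) in the Loeb space. Its Loeb measure is \(\mu(B)=\lim_{\cU}|B_n|/|X_n|\in[\delta/2,1-\delta/2]\), so \(B\) is neither null nor conull.

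We check that \(B\) is invariant under \(A\). Take an element of \(A=\prod_{\cU}A_n\) with representative \((a_n)_n\), where \(a_n\in A_n\). The action of this element on \(X\) is induced pointwise by \([x_n]\mapsto[a_nx_n]\), and this map is well defined up to null sets. Since \(a_nB_n=B_n\) for each \(n\), we get \(aB=B\) exactly on the level of internal sets. Choosing a different representative \((\tilde a_n)\) with \(d_H(a_n,\tilde a_n)\to_{\cU}0\) changes the image only on a null set. Hence \(B\) is \(A\)-invariant modulo null sets.

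This contradicts the ergodicity of \(A\). So for every \(\delta>0\), the set of \(n\) for which some \(A_n\)-orbit \(O_n\) has \(|O_n|/|X_n|>1-\delta\) is \(\cU\)-large. We now let \(\delta\) vary. For \(k\ge 1\) let \(I_k\) be the \(\cU\)-large set of indices \(n\) for which some \(A_n\)-orbit has measure greater than \(1-1/k\), and put \(J_k=I_1\cap\dots\cap I_k\). For \(n\in J_k\setminus J_{k+1}\) choose \(O_n\) to be an orbit of measure greater than \(1-1/k\). For \(n\) outside \(J_1\), which form a \(\cU\)-small set, choose \(O_n\) arbitrarily. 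Then \(|O_n|/|X_n|\to_{\cU}1\), which proves the lemma.

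The only delicate point is verifying that the action of \(A\) on the Loeb space is given coordinatewise, so that exact invariance of each \(B_n\) passes to invariance of \(B\). This follows directly from how the metric ultraproduct acts on the Loeb space. We also need ergodicity to be meant in the sense that invariant measurable sets are null or conull, which is the standard convention.
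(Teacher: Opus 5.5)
Your proof is correct and is essentially the paper's argument: a greedy union of \(A_n\)-orbits gives an exactly invariant internal set of Loeb measure in \([\delta/2,1-\delta/2]\), which contradicts ergodicity. Your size-ordered case split is just a slightly more careful version of the paper's bound \(\delta/2+(1-\delta)\), and your final diagonalization spells out a step the paper leaves implicit; the one fix needed is that indices in \(\bigcap_k J_k\) (where \(A_n\) is transitive) are not covered by your sets \(J_k\setminus J_{k+1}\), so set \(O_n=X_n\) there.
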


\begin{proof}
Suppose no such conull orbit exists.  Then there are \(\delta>0\) and a
\(\cU\)-large set of indices such that every \(A_n\)-orbit in \(X_n\) has
measure at most \(1-\delta\).  For those \(n\), choose a union \(Y_n\) of
\(A_n\)-orbits with \(\delta/2\leq |Y_n|/|X_n|\leq 1-\delta/2\).
This is done greedily: add orbits until the measure first exceeds
\(\delta/2\).  Since each orbit has measure at most \(1-\delta\), the resulting
union has measure at most
$
        \frac{\delta}{2}+(1-\delta)=1-\frac{\delta}{2}.
$
Defining \(Y_n\) arbitrarily outside
the chosen \(\cU\)-large set, the ultraproduct \([Y_n]_{\cU}\) is a non-null
and non-conull Loeb measurable set invariant under \(A\), contradicting
ergodicity.
\end{proof}

\begin{proposition}\label{prop:algebraic-recovery-rf}
Let \(G\) be finitely generated and let
\(\pi\colon G\to \prod_{\cU}\Sym(X_n)\) be a sofic embedding. Assume that
\(C(\pi(G))=\prod_{\cU}A_n\) for subgroups \(A_n\leq \Sym(X_n)\), and
assume that this centralizer acts ergodically on the associated Loeb space.
Then \(G\) is LEF. In particular, if \(G\) is finitely presented, then \(G\) is
residually finite.
\end{proposition}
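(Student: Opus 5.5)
The plan is to show that \(\pi(G)\) lies in a metric ultraproduct of finite groups acting \emph{freely}. In such an ultraproduct the Hamming metric only takes the values \(0\) and \(1\), so the ultraproduct is essentially an algebraic ultraproduct, and LEF follows directly.

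\emph{Step 1: reduce to a transitive action.} By Lemma~\ref{lem:conull-orbit}, after passing to a \(\cU\)-large set of indices there are \(A_n\)-orbits \(O_n\subseteq X_n\) with \(|O_n|/|X_n|\to_{\cU}1\). Since \(X_n\setminus O_n\) is negligible, I identify the subgroup of \(\prod_{\cU}\Sym(X_n)\) consisting of elements that preserve \([O_n]_{\cU}\) up to a null set with \(\prod_{\cU}\Sym(O_n)\). This identification is isometric up to the factor \(|O_n|/|X_n|\to_{\cU}1\). Concretely, a representative \(\sigma_n\) with \(|\sigma_n(O_n)\triangle O_n|=o(|X_n|)\) is modified on a set of size \(o(|X_n|)\) so that it preserves \(O_n\), and is then restricted to \(O_n\).

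Each \(A_n\) preserves \(O_n\) and acts transitively on it; write \(O_n\cong H_n/L_n\), where \(H_n\) is the image of \(A_n\) in \(\Sym(O_n)\). Each \(\pi(g)\) commutes with \(A=\prod_{\cU}A_n\), which acts ergodically, and \([O_n]_{\cU}\) is the conull \(A\)-invariant set. Since \(\pi(g)\) maps \(A\)-invariant sets to \(A\)-invariant sets, it preserves \([O_n]_{\cU}\) up to null sets. Hence \(\pi(G)\) sits inside \(\prod_{\cU}\Sym(O_n)\), and there it still commutes with \(\prod_{\cU}H_n\). I expect this step to be the main technical point. One has to check carefully that the restriction map is a well-defined injective homomorphism on the relevant subgroup, and that the centralizer relations survive it. This is essentially Lemma~\ref{lem:null-modification}-type bookkeeping.

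\emph{Step 2: apply the double centralizer.} Since \(\pi(G)\) commutes with \(\prod_{\cU}H_n\), Proposition~\ref{prop:centralizer-transitive} gives
\[
\pi(G)\subseteq C\Bigl(\prod_{\cU}H_n\Bigr)=\prod_{\cU}K_n,\qquad K_n:=\Nm_{H_n}(L_n)/L_n .
\]
The group \(K_n\) acts on \(H_n/L_n\) by \(gL\mapsto gaL\), and this action is free: if \(gaL=gL\) then \(a\in L\). Hence every nontrivial element of \(K_n\) moves every point, so \(d_H(k,k')\in\{0,1\}\) for \(k,k'\in K_n\).

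\emph{Step 3: conclude LEF.} Choose representatives \(\pi(g)=[k_n(g)]_{\cU}\) with \(k_n(g)\in K_n\).
\begin{itemize}
\item For \(g,h\in G\), the relation \(d_H(k_n(gh),k_n(g)k_n(h))\to_{\cU}0\) together with discreteness forces \(k_n(gh)=k_n(g)k_n(h)\) on a \(\cU\)-large set.
\item For \(g\neq e\), we have \(d_H(k_n(g),1)\to_{\cU}1\), which by discreteness means \(k_n(g)\neq 1\) on a \(\cU\)-large set.
\end{itemize}
Given a finite \(E\subseteq G\), intersect the finitely many \(\cU\)-large sets for the pairs \(g,h\in E\) and for the differences \(g^{-1}h\), \(g\neq h\in E\). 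This produces some \(n\) for which \(k_n|_E\) is injective and multiplicative on \(E\). Thus \(G\) is LEF.

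\emph{Finitely presented case.} If \(G\) is finitely presented, apply the above with \(E\) containing the generators, the prefixes of the finitely many relators, and a given nontrivial element. Then \(k_n\) on the generators defines a homomorphism \(G\to K_n\) that does not kill that element. Hence \(G\) is residually finite.
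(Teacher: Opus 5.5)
Your proposal is correct and follows the paper's proof. Both use Lemma~\ref{lem:conull-orbit} to reduce to a transitive action, apply Proposition~\ref{prop:centralizer-transitive} to place \(\pi(G)\) inside \(\prod_{\cU}\Nm_{H_n}(L_n)/L_n\), and use freeness of that action to make the ultraproduct discrete, which gives LEF. You are more explicit than the paper in passing to the faithful image \(H_n\) and in spelling out the LEF and residual-finiteness steps; for injectivity on \(E\) it is simplest to apply discreteness directly to \(\pi(g)\neq\pi(h)\) rather than going through \(g^{-1}h\).
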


\begin{proof}
By Lemma~\ref{lem:conull-orbit}, choose \(A_n\)-orbits \(O_n\subseteq X_n\)
with \(|O_n|/|X_n|\to_{\cU}1\). The identity maps on \(O_n\) exhibit the model
on \(X_n\) and its restriction to \(O_n\) as essentially equivalent. Thus, without loss of generality, we may assume that \( A_n\curvearrowright X_n\) are transitive.

Choose a basepoint in each \(X_n\). Then \(X_n=A_n/L_n\) for some subgroup
\(L_n\leq A_n\). Since \(\pi(G)\) centralizes
\(A=\prod_{\cU}A_n\), Proposition~\ref{prop:centralizer-transitive} gives
\[
        \pi(G)\subseteq C\left(\prod_{\cU}A_n\right)
        =
        \prod_{\cU}B_n,
        \qquad
        B_n:=\Nm_{A_n}(L_n)/L_n.
\]
Here \(\prod_{\cU}A_n\) is the metric ultraproduct associated with the
actions \(A_n\curvearrowright A_n/L_n\). The group \(B_n\) acts on \(A_n/L_n\)
by right translations. This action is free: if
\(bL_n\in \Nm_{A_n}(L_n)/L_n\) fixes \(aL_n\), then \(ab\in aL_n\), so
\(b\in L_n\), and hence the element is trivial. Therefore every nontrivial
element of \(B_n\) moves every point of \(X_n\). It follows that the metric
ultraproduct \(\prod_{\cU}B_n\), equipped with normalized Hamming metric, is discrete.

Thus, $G$ is identified with a subgroup of the algebraic ultraproduct \(\prod_{\cU}B_n\) and hence LEF. Moreover, if $G$ is finitely presented, then it is residually finite.
\end{proof}

It remains to show that the centralizer of a sofic embedding of a Kazhdan group is essentially rigid in the sense introduced above. This is the content of the next section.

\section{Approximate intertwiners for sofic approximations of Kazhdan groups}\label{sec:groupoid}

Throughout this section \(G\) is a Kazhdan group, \(S=S^{-1}\)
is a fixed finite generating set, \(F=F_S\), and
\(\pi:G\to \prod_{\cU}\Sym(X_n)\)
is a sofic embedding represented by homomorphisms
\(\alpha_n:F\to \Sym(X_n).\)

\begin{theorem}\label{thm:groupoid-recovery}
Let \(G\) be a Kazhdan group and let
\(\pi:G\to \prod_{\cU}\Sym(X_n)\)
be a sofic embedding represented by homomorphisms
\(\alpha_n:F_S\to\Sym(X_n)\). Then there exist an essentially equivalent  sofic embedding
\(\pi':G\to\prod_{\cU}\Sym(Y_n),\)
and subgroups
\(A_n\leq \Sym(Y_n)\)
such that
\[
        C_{\prod_{\cU}\Sym(Y_n)}(\pi'(G))
        =
        \prod_{\cU} A_n .
\]
\end{theorem}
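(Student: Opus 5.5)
First apply Theorem~\ref{thm:kun} to the homomorphisms \(\alpha_n\). This produces essentially equivalent homomorphisms \(\alpha_n^\sharp\), and by Lemma~\ref{lem:null-modification} we may replace \(\alpha_n\) by \(\alpha_n^\sharp\). We then discard the fixed singletons (a null set) and call the result \(Y_n\). Now every \(S\)-component of \(Y_n\) is an \(h_K\)-expander. The candidate for \(A_n\) is built from the finite \emph{cluster groupoid}. Fix a parameter \(\varepsilon_n\to0\) and a radius \(r_n\to\infty\), chosen slowly. Units are components \(C\) of \(Y_n\). A morphism \(C\to C'\) is a bijection \(\phi:C\to C'\) that is \(\varepsilon_n\)-almost equivariant for the generators, meaning \(\sum_{s\in S}d_H(\phi\alpha_n(s),\alpha_n(s)\phi)\le\varepsilon_n|C|\). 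The expander property gives rigidity: two almost equivariant maps \(C\to C'\) either agree on most of \(C\) or disagree on most of \(C\). This is the groupoid analogue of the argument of Kun--Thom, where a partial map agreeing with an equivariant one on a non-negligible set has a small boundary of disagreement, and the Cheeger bound then forces near-global agreement.

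The second step is the repair argument. We want composition in this groupoid to be well-defined up to the chosen tolerance. So we replace each almost equivariant \(\phi\) by a canonical representative, for instance by pulling back from a fixed set of ``reference'' morphisms obtained by majority rule on balls of radius \(r_n\), in the spirit of Lemma~\ref{lem:majority}. Rigidity from the first step guarantees that composites and inverses of representatives are again close to representatives, with errors that do not accumulate. We then obtain a finite group \(\Gamma_n\): the full group of this groupoid, i.e.\ permutations of \(Y_n\) that map components to components via groupoid morphisms. This group comes with an approximate action on \(Y_n\) and with uniformly small defect. Becker--Chapman stability \cite{BeckerChapman} then corrects this approximate action to a genuine action, moving each element by \(o(1)\) in \(d_H\). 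Here one has to arrange the relevant groups, which are essentially products of wreath-type groups over a finite groupoid, to be uniformly stable in the needed sense. We let \(A_n\leq\Sym(Y_n)\) be the image of the corrected action.

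Third, we verify \(C(\pi'(G))=\prod_\cU A_n\). For the inclusion \(\supseteq\), each element of \(A_n\) is within \(o(1)\) of an almost equivariant map on each component, hence commutes with \(\alpha_n(s)\) up to \(o(1)\), so ultraproducts of such elements lie in the centralizer. For the inclusion \(\subseteq\), take \([\sigma_n]\) in the centralizer, so that \(d_H(\sigma_n\alpha_n(s),\alpha_n(s)\sigma_n)\to_\cU0\). Decompose \(\sigma_n\) by how it distributes each component \(C\) among target components. Expansion shows that, off a set of measure \(o(1)\), \(\sigma_n\) maps almost all of a component into a single component, since otherwise the splitting produces boundary of size proportional to the mass. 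The pieces that do not arise this way, such as small components or components split by \(\sigma_n\), contribute \(o(1)\) in total by a Markov-type averaging of the commutator defect. On the remaining components, \(\sigma_n|_C\) is an almost equivariant bijection onto its target, so it is close to a groupoid morphism. Thus \(\sigma_n\) is within \(o(1)\) of an element of \(\Gamma_n\), and hence of \(A_n\).

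The main obstacle is the second step. Making composition well-defined with tolerances that do not degrade under iterated composition is the delicate part, and so is getting Becker--Chapman to apply with uniform constants. First I would try to choose \(\varepsilon_n\) as a threshold lying in a gap of the distance spectrum, which rigidity forces to exist, so that the groupoid is honestly closed under composition.
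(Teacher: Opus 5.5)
Your architecture matches the paper's: Kun's decomposition, an expansion-based distance gap, a cluster groupoid, its full group, Becker--Chapman correction, and a component-nonsplitting argument for the reverse inclusion. Two steps fail as stated, however.

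First, arrows cannot be total bijections $C\to C'$. Components linked by almost equivariant maps need not have equal size; the paper only gets $1-\varepsilon_n\le|X_{n,i}|/|X_{n,j}|\le 1/(1-\varepsilon_n)$. The centralizer sees this. Suppose $X_n=C_n\sqcup C_n'$, where $C_n'$ is $C_n$ with one vertex spliced into an $s$-cycle. The swap along the inclusion $C_n\hookrightarrow C_n'$ has commutator defect $O(1/|C_n|)$, so it lies in $C(\pi(G))$. Yet it is at distance about $1$ from every component-preserving permutation, and your groupoid has no arrow $C_n\to C_n'$. Correspondingly, in your third step the restriction of $\sigma_n$ to a component is only a partial bijection between majorities $D_i\to R_i$, not a bijection onto its target. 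Arrows must therefore be classes of partial bijections with small source and range defects and small $\Delta_S$. The full group must then be completed separately on each connected component of the groupoid.

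Second, and this is the real gap, the distance gap gives \emph{uniqueness} of a good map near a given one, never \emph{existence}. Composites and inverses of $\varepsilon_n$-good maps have defect up to a fixed multiple $K\varepsilon_n$, so without further input a composite need not be near any arrow, and the groupoid is not closed under composition. In the reverse inclusion, $\varepsilon_n$ is fixed before $\sigma$ is given, while $\operatorname{com}_S(\sigma_n)$ may tend to $0$ arbitrarily more slowly. So $\sigma_n|_{D_i}$ is generally not an arrow at all.

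The missing ingredient is Proposition~\ref{prop:two-component-repair}. On components where most vertices have correct $R$-balls (so you must also discard bad components), every partial bijection with defects below a fixed $\rho$ lies within $C_1\Delta_S(b)+\zeta$ of one with defects below any prescribed $\varepsilon$. This is where property (T) is used beyond Kun's theorem. One applies the Kun--Thom local-to-global result to the graph of $b$ in the diagonal Schreier graph on $Y\times Z$, obtaining a nearby $U$ with $|\partial_S U|\le\alpha|U|$. The Cheeger bounds on $Y$ and $Z$ then control the row and column multiplicities of $U$, so it can be trimmed to a partial bijection.

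Your suggested substitutes do not provide this. Lemma~\ref{lem:majority} needs a finite group acting transitively, which the components do not carry. A threshold in a gap of the distance spectrum is beside the point, since that gap exists at every small scale. What you would need is that the set of clusters does not change when the defect threshold is raised from $\varepsilon_n$ to $K\varepsilon_n$ (or to $\rho$), which is exactly the repair statement. By contrast, Becker--Chapman is not an obstacle: the uniform version holds with constants independent of the finite group.
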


\subsection{Good components and scale choice}

If \(Y,Z\) are finite \(S\)-labelled \(F\)-sets and
\(b:Y\dashrightarrow Z\) is a partial bijection, define
\[
        \lambda_Y(b)=\frac{|Y\setminus \operatorname{dom}(b)|}{|Y|},
        \qquad
        \lambda_Z(b)=\frac{|Z\setminus \operatorname{ran}(b)|}{|Z|},
\]
and
\[
        \Delta_S(b)
        =
        \frac{1}{|Y|}
        \left|
        \left\{
        (y,s)\in Y\times S:
        y\notin \operatorname{dom}(b)\ \text{or}\ sy\notin \operatorname{dom}(b)\ \text{or}\
        b(sy)\neq s b(y)
        \right\}
        \right|.
\]
If \(b,c:Y\dashrightarrow Z\) are partial maps, define
\[
        d_Y(b,c)
        =
        \frac{1}{|Y|}
        \left|
        \left\{
        y\in Y:
        y\notin \operatorname{dom}(b)\cap \operatorname{dom}(c)\ \text{or}\ b(y)\neq c(y)
        \right\}
        \right|.
\]
Undefined values are thus counted as disagreements.  On a component
\(X_{n,i}\), we write \(d_i=d_{X_{n,i}}\).

By Theorem~\ref{thm:kun}, we may assume that
\(X_n=\bigsqcup_{i\in I_n} X_{n,i}\)
is a decomposition into \(F\)-orbits whose labelled Schreier graphs have
Cheeger constant at least a fixed number \(h>0\).

\begin{lemma}\label{lem:good-components}
Let \((R_m)_{m\geq 1}\) be a sequence of positive integers and let
\((\delta_m)_{m\geq 1}\) satisfy \(0<\delta_m<1\) and
\(\delta_m\to0\).  Then there are sets \(U_m\in\cU\), with
\(U_{m+1}\subseteq U_m\), such that for every \(m\) and every \(n\in U_m\)
there is a set of components \(I_{n,m}\subseteq I_n\) satisfying
\[
        \sum_{i\in I_n\setminus I_{n,m}}
        \frac{|X_{n,i}|}{|X_n|}
        \leq \delta_m,
\]
and, for every \(i\in I_{n,m}\),
at least \((1-\delta_m)|X_{n,i}|\) vertices \(x\in X_{n,i}\) have rooted
\(S\)-labelled \(R_m\)-ball isomorphic to the rooted radius-\(R_m\) ball in
\(\operatorname{Cay}(G,S)\).  Consequently, if \(w\in F\) has length at most
\(R_m\) and represents \(1\in G\), then
\[
        \frac{|\{x\in X_{n,i}:\alpha_n(w)x\neq x\}|}{|X_{n,i}|}
        \leq \delta_m
        \qquad(i\in I_{n,m}).
\]
\end{lemma}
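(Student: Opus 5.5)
The plan is a Markov-inequality argument applied at a fixed scale, combined with the diagonal nesting of ultrafilter-large sets. First fix \(m\) and put \(B_{n,m}\subseteq X_n\) for the set of vertices whose rooted \(S\)-labelled \(R_m\)-ball is \emph{not} isomorphic to the radius-\(R_m\) ball in \(\operatorname{Cay}(G,S)\). Since \(\alpha_n\) represents a sofic approximation of \(G\), we have \(|B_{n,m}|/|X_n|\to_{\cU}0\). To see this, note that the ball is determined by which of the finitely many words of length at most \(2R_m\) act trivially at the point. Each word representing \(1\) fixes all but \(o(|X_n|)\) points. Each word not representing \(1\) moves all but \(o(|X_n|)\) points. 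A finite union bound over these words then gives the claim. The same applies after replacing \(\alpha_n\) by the essentially equivalent model of Theorem~\ref{thm:kun}, since that replacement only changes an \(o(|X_n|)\)-set of points and hence \(o(|X_n|)\) balls of bounded radius (bounded degree).

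Next, call a component \(i\) \emph{\(m\)-good} if \(|B_{n,m}\cap X_{n,i}|\le \delta_m|X_{n,i}|\), and let \(I_{n,m}\) be the set of \(m\)-good components. Markov's inequality controls the bad components:
\[
\sum_{i\notin I_{n,m}}\frac{|X_{n,i}|}{|X_n|}\le \frac{1}{\delta_m}\cdot\frac{|B_{n,m}|}{|X_n|}.
\]
The right-hand side tends to \(0\) along \(\cU\). So the set \(V_m\) of indices \(n\) where it is at most \(\delta_m\) belongs to \(\cU\). Then set \(U_m=V_1\cap\dots\cap V_m\cap\{n:n\ge m\}\). This set lies in \(\cU\) because \(\cU\) is non-principal, and the sets are decreasing by construction. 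For \(n\in U_m\), the sets \(I_{n,m}\) have the required properties.

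For the consequence, take \(i\in I_{n,m}\) and \(w\) of length at most \(R_m\) representing \(1\in G\). For \(x\notin B_{n,m}\), the path read by \(w\) from \(x\) stays in the \(R_m\)-ball. Because the ball is isomorphic to the Cayley ball, in which \(w\) is a closed path, we get \(\alpha_n(w)x=x\). Hence the points of \(X_{n,i}\) moved by \(\alpha_n(w)\) lie in \(B_{n,m}\cap X_{n,i}\), and there are at most \(\delta_m|X_{n,i}|\) of them.

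The main obstacle is the first step. Kun's modification changes the permutations, so the convergence \(|B_{n,m}|/|X_n|\to_{\cU}0\) must be checked for the model actually in use. This goes through via bounded degree: only points within distance \(R_m\) of the modified set, which is \(o(|X_n|)\), can change their balls. Also, isomorphism of the labelled balls must be read as including the identification of endpoints of words. That is why words up to length \(2R_m\) are tested in the first step.
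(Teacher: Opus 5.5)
Your proposal is correct and follows the paper's proof essentially step for step. The ingredients are the same:
\begin{itemize}
\item the bad-ball set \(B_{n,m}\);
\item the same Markov threshold (your \(V_m\) is exactly the paper's \(\{n:|B_{n,m}|\le\delta_m^2|X_n|\}\));
\item the same nested intersections \(U_m=\bigcap_{\ell\le m}V_\ell\);
\item the same containment of the failure set of \(w\) in \(B_{n,m}\cap X_{n,i}\).
\end{itemize}
Your union-bound justification that \(|B_{n,m}|/|X_n|\to_{\cU}0\), and its transfer to Kun's modified model via bounded degree, are details the paper leaves implicit. The same holds for the harmless cofinite factor \(\{n\ge m\}\). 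One small correction: detecting labelled edges between two sphere vertices strictly requires testing words up to length \(2R_m+1\), but any finite bound suffices for the union bound.
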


\begin{proof}
Fix \(m\).  Let
\[
        B_{n,m}
        =
        \{x\in X_n:
        B_{R_m}(x)\not\cong B_{R_m}(e,\operatorname{Cay}(G,S))\}.
\]
Since \((\alpha_n)_n\) is a sofic model for \(G\),
\(|B_{n,m}|/|X_n|\to_{\cU}0.\)
Set
$
        V_m=
        \left\{
        n:\ |B_{n,m}|\leq \delta_m^2 |X_n|
        \right\}\in\cU,
$
and replace \(V_m\) by
\(U_m=\bigcap_{\ell\leq m}V_\ell.\)
Then \(U_m\in\cU\) and \(U_{m+1}\subseteq U_m\).

For \(n\in U_m\), define
$
        I_{n,m}
        =
        \left\{
        i\in I_n:
        |B_{n,m}\cap X_{n,i}|
        \leq \delta_m |X_{n,i}|
        \right\}.
$
Then
\[
        \delta_m
        \sum_{i\notin I_{n,m}}|X_{n,i}|
        <
        \sum_{i\notin I_{n,m}}|B_{n,m}\cap X_{n,i}|
        \leq |B_{n,m}|
        \leq \delta_m^2 |X_n|.
\]
Dividing by \(\delta_m|X_n|\) gives the component-size estimate.

If \(w=1\) in \(G\) and \(|w|\leq R_m\), then \(\alpha_n(w)x=x\) at every
vertex whose \(R_m\)-ball is the Cayley \(R_m\)-ball.  Hence the failure set
of \(w\) on \(X_{n,i}\) is contained in \(B_{n,m}\cap X_{n,i}\).
\end{proof}

We use the following two-component form of the construction of Kun and the
second author, see \cite{KunThom}.

\begin{proposition}
\label{prop:two-component-repair}
There are constants \(C_1>0\) and \(\rho>0\), depending only on
\(G,S,h\), with the following property.  For
every \(0<\varepsilon<1\) and \(0<\zeta<1\), there are
\(R\in\mathbb N\) and \(0<\delta<1\) such that whenever \(Y,Z\) are finite
regularly \(S\)-labelled graphs with Cheeger constants at least \(h\), and at
least \((1-\delta)|Y|\), respectively \((1-\delta)|Z|\), vertices have correct
rooted \(R\)-balls for the Cayley graph of \(G\), every partial bijection
\(b:Y\dashrightarrow Z\) satisfying
\[
        \lambda_Y(b)\leq\rho,\qquad
        \lambda_Z(b)\leq\rho,\qquad
        \Delta_S(b)\leq\rho
\]
admits a partial bijection \(b':Y\dashrightarrow Z\) such that
\[
        \lambda_Y(b')\leq\varepsilon,\qquad
        \lambda_Z(b')\leq\varepsilon,\qquad
        \Delta_S(b')\leq\varepsilon,
\]
and
$
        d_Y(b,b')\leq C_1\Delta_S(b)+\zeta .
$
\end{proposition}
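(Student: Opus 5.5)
We argue by contradiction and compactness, in the spirit of the stability argument of Kun and the second author \cite{KunThom}. Fix \(\varepsilon,\zeta\). Suppose that for some \(\rho\), with \(C_1\) yet to be chosen, no pair \(R,\delta\) works. Then for \(R_k\to\infty\) and \(\delta_k\to0\) we obtain counterexamples \((Y_k,Z_k,b_k)\). The input graphs converge in the Benjamini--Schramm sense to \(\operatorname{Cay}(G,S)\). Hence \((\alpha^Y_k)\) and \((\alpha^Z_k)\) are sofic approximations of \(G\), and they have expansion \(h\). Pass to the Loeb spaces \((X_Y,\mu_Y)\) and \((X_Z,\mu_Z)\) along \(\cU\). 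Each carries a measure-preserving action of \(G\) that is essentially free. Each is also ergodic, indeed strongly ergodic, since the Cheeger bound \(h\) passes to the ultraproduct and gives a spectral gap. The partial bijections \(b_k\) give a partial measure-preserving isomorphism \(b\) from \(X_Y\) to \(X_Z\). It is defined off a set of measure at most \(\rho\), and its defect \(\Delta_S(b)=\lim_\cU\Delta_S(b_k)\le\rho\) is small.

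The key step is the construction of a genuine equivariant map near \(b\). Consider the graph of \(b\) as a subset of \(X_Y\times X_Z\), or better, its indicator as a vector in \(L^2\) of the product Loeb space. Since \(\Delta_S(b)\) is small, this vector is almost invariant under the diagonal action of the generators. By property (T) there is a genuinely invariant vector \(\xi\) with \(\|\xi-\mathbf 1_{\mathrm{graph}(b)}\|\lesssim \kappa^{-1}\sqrt{\Delta_S(b)}\), where \(\kappa\) is the Kazhdan constant. To get a linear bound \(C_1\Delta_S(b)\) rather than a square root, we use the finite-scale Cheeger estimates directly. The disagreement set of \(b\) with an intertwiner is controlled through the edge boundary, via \eqref{eq:boundary-symmetrize} and the expander inequality. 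This is how the ``linear in \(\Delta\)'' stability for expanders is proved in \cite{KunThom}.

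We then round \(\xi\) to an invariant partial bijection. Freeness and ergodicity force \(\xi\), viewed as a Markov kernel, to be close to a measure-preserving \(G\)-equivariant bijection \(\phi\colon X_Y\to X_Z\). Here \(\rho\) is chosen small enough, relative to the spectral-gap constants, that the majority rounding of Lemma~\ref{lem:majority} yields a single atom of mass \(>1/2\) at almost every point. The resulting \(\phi\) satisfies \(d(b,\phi)\le C_1\Delta_S(b)\). Finally we lift \(\phi\) to finite level. Representing \(\phi\) by partial bijections \(b'_k\) with \(d(b_k,b'_k)\to_\cU d(b,\phi)\), equivariance of \(\phi\) gives \(\Delta_S(b'_k)\to_\cU0\) and \(\lambda(b'_k)\to_\cU0\). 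For \(\cU\)-many \(k\) this yields \(\lambda,\Delta_S\le\varepsilon\) and \(d(b_k,b'_k)\le C_1\Delta_S(b_k)+\zeta\). This contradicts the choice of the counterexamples.

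The main obstacle is the rounding step with a linear constant \(C_1\). The \(L^2\) spectral-gap argument produces an invariant kernel, not a point map. Converting it into a map loses control unless the kernel is shown to be concentrated, with mass \(>1/2\), on a single point fibrewise. We first try to show, via invariance of \(\xi\) together with essential freeness, that the fibre distributions are \(G\)-equivariant. Then the majority atom is equivariant, and its \(\xi\)-mass bounds the disagreement linearly. If this gives only a square-root bound, we fall back on the combinatorial expander-repair argument of \cite{KunThom}, run at finite level. That argument repairs \(b\) along Cheeger cuts, and each misplaced point is charged to a boundary edge, which gives the constant \(C_1\approx 1/h\).
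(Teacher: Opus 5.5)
Your compactness route differs from the paper's finite-level argument, but as written its central step does not go through. In $L^2$ of the product Loeb space $(X_Y\times X_Z,\mu_Y\otimes\mu_Z)$ the graph of $b$ is a null set. It lies in the internal set $[\operatorname{graph}(b_k)]_{\cU}$, whose Loeb measure is $\lim_{\cU}|\operatorname{dom}(b_k)|/(|Y_k||Z_k|)=0$. So $\mathbf 1_{\operatorname{graph}(b)}=0$, its almost-invariance is vacuous, and property (T) gives nothing.

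The graph has unit size only if you normalize counting measure on $Y_k\times Z_k$ by $|Y_k|$ alone. This is the normalization behind the paper's comparison of $|\partial_S B|\le\Delta_S(b)|Y|$ with $|B|\approx|Y|$. But then the diagonal $F_S$-action is not a $G$-action, not even along $\cU$. A single vertex $y_0\in Y_k$ at which a fixed relator $w$ fails already gives the unit vector $(|Y_k|/|Z_k|)^{1/2}\mathbf 1_{\{y_0\}\times Z_k}$, which $w$ moves to an orthogonal vector. So the Kazhdan property of $G$ cannot be applied to the whole space.

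This is exactly why the paper stays at finite level and uses the \emph{local} statement \cite[Proposition~3.3]{KunThom} in the diagonal graph on $Y\times Z$. It applies it to $B_{\rm loc}=B\cap(Y_{\rm loc}\times Z_{\rm loc})$, whose points have correct $r$-balls, and obtains $U$ with $|\partial_S U|\le\alpha|U|$ and $|U\triangle B|\le C_0\Delta_S(b)|Y|+\zeta|Y|/20$. To rescue your route you would have to build a genuine unitary $G$-space containing the graph vectors. One candidate is the closed span, in the ultraproduct of the spaces $\ell^2(Y_k\times Z_k)$ with this normalization, of sequences with uniformly bounded row and column sums; you would then have to check that $F_S$ acts through $G$ there. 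None of this is in the proposal. Once it is done, the square root is harmless: for partial permutation matrices $V_b,V_c$ one has $d_Y(b,c)=\tfrac12\bigl(\|V_b-V_c\|^2+\lambda_Y(b)+\lambda_Y(c)\bigr)$, so the (T) bound is already linear in Hamming terms.

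The rounding step is also unsupported. Lemma~\ref{lem:majority} rests on averaging over a \emph{finite} group acting transitively, and there is no analogue for $G$ acting on a Loeb space. Freeness and ergodicity do not by themselves put an atom of mass $>1/2$ into almost every fibre of an invariant kernel; the product kernel is invariant and atomless. What must be shown is that the invariant object is concentrated on exactly one entry in all but a \emph{small} (not merely $<1/2$) proportion of rows and of columns. In the paper this is the content of the multiplicity estimates (2)--(3). There the cuts between $m_1^{-1}([0,k])$ and $m_1^{-1}([k+1,\infty))$ are charged to $|\partial_S U|\le\alpha|U|$ via the Cheeger constant of $Y$ (resp.\ $Z$), using that multiplicity-one rows form a majority.

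Your fallback description of the finite argument also misplaces the mechanism. Expansion of $Y$ and $Z$ alone gives no such rigidity; for free groups, random expander models violate the conclusion. In the paper $C_1$ comes from the Kun--Thom constant $c$, i.e.\ from the Kazhdan constant and $|S|$, while $h$ only enters the $\alpha/h$ terms absorbed into $\zeta$. It is not $C_1\approx1/h$.

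Two smaller points. Along $\cU$ the map $b$ rescales measure by $t=\lim_{\cU}|Y_k|/|Z_k|$, so $t=1$ must be proved rather than assumed. And the equivariant $\phi$ can be pushed back to partial bijections $b'_k$ only if it is constructed internally, for instance as a fibrewise argmax of an internal representative of $\xi$, since a Loeb-measurable map need not be an ultraproduct of finite maps.
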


\begin{proof}
We need \cite[Proposition~3.3]{KunThom}: for every \(\alpha>0\) there is a radius \(r\) such that, in
any finite regularly \(S\)-labelled graph, a subset \(T\) all of whose points
have correct rooted \(r\)-balls is within
\(c|\partial_S T|\) of a subset \(U\) with
\(|\partial_S U|\leq \alpha |U|\).  Here \(c\) depends only on the Kazhdan
constant and on \(|S|\).

Fix \(\varepsilon\) and \(\zeta\).  Choose \(\alpha>0\) so small that every
constant multiple of \(\alpha/h\) occurring below is at most
\(\min(\varepsilon,\zeta)/20\), and let \(r\) be supplied by the aforementioned proposition for this \(\alpha\).  Let \(R\geq 2r+2\), and later choose
\(\delta>0\) sufficiently small.  Finally choose \(\rho>0\), depending only on
\(S,h\) and the constant \(c\), so small that
\(c\rho<1/100\) and all estimates below in which \(\rho\) is used to
compare \(|Y|\) and \(|Z|\) are valid.  Since
\(|\operatorname{dom}(b)|=|\operatorname{ran}(b)|\) and \(\lambda_Y(b),\lambda_Z(b)\leq\rho\), we have
\[
        1-3\rho\leq \frac{|Y|}{|Z|}\leq 1+3\rho .
\]

Consider the diagonal \(S\)-labelled graph on \(Y\times Z\),
\(s(y,z)=(sy,sz)\), and the graph
\[
        B=\{(y,b(y)):y\in \operatorname{dom}(b)\}\subseteq Y\times Z .
\]
If \((y,b(y))\in B\) and \(s(y,b(y))\notin B\), then either \(sy\notin \operatorname{dom}(b)\)
or \(b(sy)\neq s b(y)\). Therefore
$
        |\partial_S B|\leq \Delta_S(b)|Y| .
$
Let \(Y_{\rm loc}\subseteq Y\) and \(Z_{\rm loc}\subseteq Z\) be the vertices
with correct rooted \(R\)-balls.  Put
$
        B_{\rm loc}=B\cap(Y_{\rm loc}\times Z_{\rm loc}).
$
Since \(b\) is injective and \(|Z|/|Y|\) is bounded by \(2\), for \(\delta\)
small enough we have
$
        |B\setminus B_{\rm loc}|\leq 3\delta |Y| .
$
At each point of \(B_{\rm loc}\), the rooted \(r\)-ball in the diagonal graph is
the rooted \(r\)-ball in the Cayley graph of \(G\): any relation of length at
most \(2r\) visible in the diagonal ball would be visible in each coordinate,
and the coordinate balls are correct because \(R\geq 2r+2\).
Moreover
\[
        |\partial_S B_{\rm loc}|
        \leq |\partial_S B|+2|S|\,|B\setminus B_{\rm loc}|
        \leq \Delta_S(b)|Y|+6|S|\delta |Y| .
\]
Applying \cite[Proposition~3.3]{KunThom} in the diagonal graph gives a set
\(U\subseteq Y\times Z\) such that
$
        |\partial_S U|\leq \alpha |U|
$
and
\[
        |U\triangle B_{\rm loc}|
        \leq c\bigl(\Delta_S(b)+6|S|\delta\bigr)|Y| .
\]
After increasing the constant and choosing \(\delta\) so that the
\(\delta\)-term is at most \(\zeta |Y|/20\), we may record this as
\[
        |U\triangle B|
        \leq C_0\Delta_S(b)|Y|+\frac{\zeta}{20}|Y| .        \tag{1}
\]
In particular \(|U|=(1+O(\rho+\zeta))|Y|\).  Moreover, since \(B\) is the graph of a
partial bijection, every row in \(\operatorname{dom}(b)\) and every column in \(\operatorname{ran}(b)\) meets
\(B\) exactly once; a row or column can have different multiplicity in \(U\)
only if it meets \(U\triangle B\).  Hence, by \((1)\) and the choice of
\(\rho\), the rows of \(U\)-multiplicity exactly one number at least
\((1-\rho-C_0\rho-\zeta/20)|Y|>|Y|/2\), and likewise for columns.

For \(y\in Y\) and \(z\in Z\) define the row and column multiplicities
\[
        m_1(y)=|U\cap(\{y\}\times Z)|,
        \qquad
        m_2(z)=|U\cap(Y\times\{z\})|.
\]
For the row multiplicities, exactly as in the last paragraph of the proof of
\cite[Theorem~3.4]{KunThom},
\[
        \sum_{k\geq0}
        |E_Y(m_1^{-1}([0,k]),m_1^{-1}([k+1,\infty)))|
        \leq |\partial_S U|.
\]
Since the Cheeger constant of \(Y\) is at least \(h\) and the set of rows of
multiplicity one has size \(>|Y|/2\), this implies
\[
        |m_1^{-1}(0)|+
        \sum_{k\geq2}(k-1)|m_1^{-1}(k)|
        \leq \frac{\alpha}{h}|U| .        \tag{2}
\]
The identical argument in the \(Z\)-coordinate gives
\[
        |m_2^{-1}(0)|+
        \sum_{k\geq2}(k-1)|m_2^{-1}(k)|
        \leq \frac{\alpha}{h}|U| .        \tag{3}
\]
Delete points from rows and columns of multiplicity at least \(2\), and add
points in empty rows and columns, to obtain the graph \(B'\) of a partial
bijection \(b':Y\dashrightarrow Z\).  Equations \((2)\) and \((3)\) show that
this changes at most \(C_2\alpha |Y|/h\) points.  Consequently, by our choice of
\(\alpha\),
\[
        \lambda_Y(b'),\lambda_Z(b')\leq \varepsilon/2 .        \tag{4}
\]
The diagonal boundary of \(B'\) is bounded by the boundary of \(U\) plus at most
\(|S|\) times the number of points changed in the last row-column correction.
Thus
\[
        |\partial_S B'|
        \leq \alpha |U|+C_3|S|\alpha |Y|/h
        \leq \varepsilon |Y| .        \tag{5}
\]
Since \(B'\) is the graph of \(b'\), the boundary estimate \((5)\), together
with the source defect in \((4)\), is precisely
\(\Delta_S(b')\leq\varepsilon\) after decreasing \(\alpha\) once more.

Finally, \((1)\) and the row-column correction give
\[
        |B\triangle B'|
        \leq C_0\Delta_S(b)|Y|+\frac{\zeta}{20}|Y|+\frac{C_2\alpha}{h}|Y|.
\]
Recall that, in the definition of \(d_Y\), every point \(y\notin \operatorname{dom}(b)\cap
\operatorname{dom}(b')\) counts as a disagreement; the points outside \(\operatorname{dom}(b)\cup \operatorname{dom}(b')\) are
not seen by \(|B\triangle B'|\), but their number is at most
\(\lambda_Y(b)|Y|\leq \Delta_S(b)|Y|\), since every \(y\notin \operatorname{dom}(b)\)
contributes all \(|S|\) pairs \((y,s)\) to the defect count of \(b\).
Therefore
\[
        d_Y(b,b')
        \leq
        \frac{|B\triangle B'|}{|Y|}+\lambda_Y(b)
        \leq
        (C_0+1)\Delta_S(b)+\frac{\zeta}{20}+\frac{C_2\alpha}{h}.
\]
By the choice of \(\alpha\), the last two terms are at most \(\zeta\).  Thus
\(d_Y(b,b')\leq C_1\Delta_S(b)+\zeta\) with \(C_1:=C_0+1\), a constant
depending only on
\(G,S,h\).  This proves the proposition.
\end{proof}

\begin{lemma}\label{lem:cluster-scales}
There is a sequence
$
        \varepsilon_n\to_{\cU}0
$
and component sets \(J_n\subseteq I_n\), such that
\[
        \beta_n:=
        \sum_{i\in I_n\setminus J_n}\frac{|X_{n,i}|}{|X_n|}
        \to_{\cU}0.
\]
Put
\[
        K:=2|S|+2,
        \qquad
        q_n:=\frac{2\varepsilon_n}{h},
        \qquad
        r_n:=(KC_1+1)\varepsilon_n.
\]
Then \(q_n,r_n\to_{\cU}0\), and, for every \(n\) in a \(\cU\)-large set,
the following hold for components in \(J_n\).

\begin{enumerate}[label=\textup{(\roman*)}]
\item If \(b:X_{n,i}\dashrightarrow X_{n,j}\) satisfies
\[
        \lambda_{X_{n,i}}(b)\leq K\varepsilon_n,
        \qquad
        \lambda_{X_{n,j}}(b)\leq K\varepsilon_n,
        \qquad
        \Delta_S(b)\leq K\varepsilon_n,
\]
then there is \(b':X_{n,i}\dashrightarrow X_{n,j}\) satisfying
\[
        \lambda_{X_{n,i}}(b')\leq \varepsilon_n,
        \qquad
        \lambda_{X_{n,j}}(b')\leq \varepsilon_n,
        \qquad
        \Delta_S(b')\leq \varepsilon_n,
\]
and
$
        d_i(b,b')\leq r_n.
$

\item If \(b,c:X_{n,i}\dashrightarrow X_{n,j}\) satisfy the three
\(\varepsilon_n\)-bounds, then
\[
        d_i(b,c)\leq q_n
        \qquad\text{or}\qquad
        d_i(b,c)\geq 1-q_n.
\]

\item The  ordinary composition before correction  of two composable partial bijections satisfying the
three \(\varepsilon_n\)-bounds, and the inverse of any such partial bijection,
satisfy the three \(K\varepsilon_n\)-bounds.
\end{enumerate}
\end{lemma}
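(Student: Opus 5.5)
The plan is a diagonal argument along \(\cU\). It combines Lemma~\ref{lem:good-components} with Proposition~\ref{prop:two-component-repair} at a sequence of scales \(\varepsilon^{(m)}\downarrow0\), and adds a separate rigidity statement for (ii) that comes from the Cheeger bound \(h\).

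\emph{Scales.} For each \(m\) set \(\varepsilon^{(m)}=2^{-m}\) and \(\zeta^{(m)}=\varepsilon^{(m)}\). Proposition~\ref{prop:two-component-repair} with target accuracy \(\varepsilon^{(m)}\) and \(\zeta^{(m)}\) supplies \(R^{(m)}\) and \(\delta^{(m)}\). Enlarge \(R^{(m)}\) so that it is at least \(3\). Let \(R_m=R^{(m)}\), and let \(\delta_m\le\delta^{(m)}\) be small and decreasing. Lemma~\ref{lem:good-components} then gives nested sets \(U_m\in\cU\) and component sets \(I_{n,m}\) carrying at least \(1-\delta_m\) of the mass. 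For \(n\in U_m\setminus U_{m+1}\), put \(\varepsilon_n=\varepsilon^{(m)}\) and \(J_n=I_{n,m}\); for \(n\in\bigcap_m U_m\), choose the largest \(m\le n\). Two conditions must hold:
\begin{enumerate}[label=\textup{(\alph*)}]
\item \(K\varepsilon_n\le\rho\), so that the hypotheses of Proposition~\ref{prop:two-component-repair} are met;
\item \(m\) is large enough that the threshold inequalities below hold.
\end{enumerate}
Indices with small \(m\) form a set outside \(\cU\), or they can be discarded. Then \(\varepsilon_n,\beta_n\to_\cU0\). Item (i) now follows directly from the proposition applied with target \(\varepsilon_n\) and \(\zeta=\varepsilon_n\):
\[
d_i(b,b')\le C_1\Delta_S(b)+\varepsilon_n\le (KC_1+1)\varepsilon_n=r_n .
\]

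\emph{Item (iii).} This is bookkeeping. For the inverse \(b^{-1}\), the \(\lambda\)-values swap. The defect set of \(b^{-1}\) is controlled by that of \(b\) together with the missing range, and \(|X_{n,i}|\) and \(|X_{n,j}|\) agree up to a factor \(1\pm 3\varepsilon_n\); this gives \(\Delta_S(b^{-1})\le 2\varepsilon_n+\lambda\)-terms, which is \(\le K\varepsilon_n\). For a composition \(cb\), a point lies outside the domain only if it is outside \(\operatorname{dom}b\) or \(b\) maps it outside \(\operatorname{dom}c\), so \(\lambda\le 2\varepsilon_n\) up to the size-ratio factor. A pair \((y,s)\) is defective only if it is defective for \(b\), or if \((b(y),s)\) is defective for \(c\). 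Counting the second kind via injectivity of \(b\) gives \(\Delta_S(cb)\le(2+O(\varepsilon_n))\varepsilon_n\le K\varepsilon_n\). The constant \(K=2|S|+2\) leaves room for the factors of \(|S|\) coming from undefined points.

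\emph{Item (ii).} This is the main obstacle and must use the expansion. Take \(b\) and \(c\) with the three \(\varepsilon_n\)-bounds, and let
\[
A=\{y\in\operatorname{dom}b\cap\operatorname{dom}c: b(y)=c(y)\}.
\]
If \(y\in A\), \(sy\) lies in both domains, and neither \((y,s)\) is defective for \(b\) nor for \(c\), then
\[
b(sy)=s\,b(y)=s\,c(y)=c(sy),
\]
so \(sy\in A\). Hence \(|\partial_S A|\le 2\varepsilon_n|X_{n,i}|\), since every boundary pair is defective for \(b\) or for \(c\). The Cheeger bound gives \(\min(\nu(A),1-\nu(A))\le 2\varepsilon_n/h=q_n\). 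The left-hand side of this inequality is computed inside the component's own measure, so no stability input is needed. Finally, \(d_i(b,c)=1-\nu(A)\), because points outside either domain count as disagreements and are excluded from \(A\). This yields the dichotomy \(d_i(b,c)\le q_n\) or \(d_i(b,c)\ge 1-q_n\). I expect only the careful treatment of domain and undefined points in \(\Delta_S\) to need attention: they are counted in \(\Delta_S\), so they are absorbed in the \(2\varepsilon_n\) boundary bound.
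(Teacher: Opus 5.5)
Your proposal is correct and follows the paper's proof essentially step by step. It uses the same diagonal choice of scales along \(\cU\), combining Lemma~\ref{lem:good-components} with Proposition~\ref{prop:two-component-repair} to get (i) with \(\zeta=\varepsilon_n\); the same agreement-set boundary estimate plus Cheeger bound for (ii); and the same size-comparison bookkeeping for (iii). The one thing to make explicit is that \(\delta_m\to0\) (the paper takes \(\delta_m<1/m\)): Lemma~\ref{lem:good-components} requires this, and it is what gives \(\beta_n\le\delta_{m(n)}\to_{\cU}0\).
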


\begin{proof}
Choose \(a_m\downarrow0\) with \(a_m<1/m\).  Apply
Proposition~\ref{prop:two-component-repair} with
\(\varepsilon=a_m\) and \(\zeta=a_m\), and let \(R_m,\delta_m\) be the
corresponding constants.  After decreasing \(\delta_m\), assume
\(\delta_m<1/m\).  Applying Lemma~\ref{lem:good-components} to these
\(R_m,\delta_m\), obtain decreasing \(\cU\)-large sets \(U_m\) and component
sets \(I_{n,m}\) for \(n\in U_m\).  For \(n\in U_1\), define
\[
        m(n)=\max\{m\leq n:n\in U_m\},
        \qquad
        J_n=I_{n,m(n)},
        \qquad
        \varepsilon_n=a_{m(n)}.
\]
For \(n\notin U_1\), put \(m(n)=1\), \(J_n=I_n\), and
\(\varepsilon_n=a_1\).
Then \(m(n)\to_{\cU}\infty\),
\(\beta_n\leq\delta_{m(n)}\to_{\cU}0\), and
\(\varepsilon_n,q_n,r_n\to_{\cU}0\).  Shrinking to a \(\cU\)-large set, we
also assume
$
        K\varepsilon_n\leq\rho.
$

For \textup{(i)}, Proposition~\ref{prop:two-component-repair} gives
$
        d_i(b,b')
        \leq C_1\Delta_S(b)+\varepsilon_n
        \leq (KC_1+1)\varepsilon_n
        =r_n.
$

For \textup{(ii)}, let
$
        A=A(b,c)
        =\{x\in \operatorname{dom}(b)\cap \operatorname{dom}(c):b(x)=c(x)\}.
$
If \(x\in A\), \(s\in S\), and \(sx\notin A\), then \((x,s)\) is counted
in the defect set of \(b\) or of \(c\).  Hence
$
        |\partial_S A|\leq 2\varepsilon_n|X_{n,i}|.
$
Expansion gives either
\[
        |A|\leq q_n|X_{n,i}|
        \qquad\text{or}\qquad
        |X_{n,i}\setminus A|\leq q_n|X_{n,i}|.
\]
Since
\[
        d_i(b,c)=1-\frac{|A|}{|X_{n,i}|},
\]
this is exactly the asserted dichotomy.

For \textup{(iii)}, first note that every partial bijection
\(b:X_{n,i}\dashrightarrow X_{n,j}\) satisfying the three
\(\varepsilon_n\)-bounds obeys
\[
        1-\varepsilon_n
        \leq \frac{|X_{n,j}|}{|X_{n,i}|}
        \leq \frac{1}{1-\varepsilon_n}.
\]
Let
$
        b:X_{n,i}\dashrightarrow X_{n,j} $ and $
        c:X_{n,j}\dashrightarrow X_{n,k}
$
be allowed.  The source complement of \(cb\) is contained in
$
        (X_{n,i}\setminus \operatorname{dom}(b))
        \cup b^{-1}(X_{n,j}\setminus \operatorname{dom}(c)),
$
and the analogous estimate holds for the range complement.  Using this size comparison, both normalized defects are at most
\[
        \varepsilon_n+\frac{\varepsilon_n}{1-\varepsilon_n}
        \leq 3\varepsilon_n
\]
on a \(\cU\)-large set.  Likewise, a pair can be bad for \(cb\) only if it is
bad for \(b\), or its \(b\)-image is bad for \(c\).  Thus
\[
        \Delta_S(cb)
        \leq
        \varepsilon_n+\frac{\varepsilon_n}{1-\varepsilon_n}
        \leq3\varepsilon_n.
\]
These bounds are at most \(K\varepsilon_n\).

For inverses, the source and range defects are interchanged.  If a pair
\((z,s)\in X_{n,j}\times S\) is counted by \(\Delta_S(b^{-1})\), then either
\(z\notin \operatorname{ran}(b)\) or \(sz\notin \operatorname{ran}(b)\), accounting for at most
\(2|S|\varepsilon_n|X_{n,j}|\) pairs, or the pair corresponds injectively
under \(z=b(y)\) to a pair counted by \(\Delta_S(b)\).  Hence
\[
        \Delta_S(b^{-1})
        \leq
        2|S|\varepsilon_n
        +\frac{|X_{n,i}|}{|X_{n,j}|}\Delta_S(b)
        \leq(2|S|+2)\varepsilon_n
        =K\varepsilon_n,
\]
where the last inequality again follows from the size comparison above.
This proves \textup{(iii)}.
\end{proof}

We now replace \(X_n\) by the invariant subset
\[
        X_n^{\mathrm{good}}
        =
        \bigsqcup_{i\in J_n}X_{n,i}.
\]
Since
\(|X_n\setminus X_n^{\mathrm{good}}|/|X_n|=\beta_n\to_{\cU}0,\)
this is a passage within the same essential-equivalence class.  We then rename
\(X_n^{\mathrm{good}}\) as \(X_n\), \(J_n\) as \(I_n\), and keep the notation
\(X_n=\bigsqcup_{i\in I_n}X_{n,i}\).

Replacing the finite models and auxiliary data on a \(\cU\)-small set of
indices does not change any metric ultraproduct.  We therefore assume henceforth, for every
\(n\), that the conclusions of Lemma~\ref{lem:cluster-scales} hold and that
$
        K\varepsilon_n\leq\rho,$ and $
        \varepsilon_n,q_n,r_n<\frac{1}{100}.
$

\section{The cluster groupoid}

\begin{definition}\label{def:cluster-groupoid}
For each \(n\), define a finite groupoid
$
        \cC_n\rightrightarrows I_n
$
as follows.  The objects are \(i\in I_n\).  A morphism \(i\to j\) is an
equivalence class of partial bijections
\(b:X_{n,i}\dashrightarrow X_{n,j}\) satisfying
\[
        \lambda_{X_{n,i}}(b)\leq\varepsilon_n,
        \qquad
        \lambda_{X_{n,j}}(b)\leq\varepsilon_n,
        \qquad
        \Delta_S(b)\leq\varepsilon_n,
\]
where
\[
        b\sim c
        \quad\Longleftrightarrow\quad
        d_i(b,c)\leq\frac15.
\]
If \(\gamma:i\to j\) is represented by \(b\), write \(\gamma=[b]\).

If \(\gamma=[b]:i\to j\) and \(\delta=[c]:j\to k\), choose an improvement
\(u:X_{n,i}\dashrightarrow X_{n,k}\) of \(cb\) as in
Lemma~\ref{lem:cluster-scales}\textup{(i)}, and define
$
        \delta\gamma=[u].
$
\end{definition}

\begin{lemma}\label{lem:cluster-groupoid}
Definition~\ref{def:cluster-groupoid} gives a finite groupoid.  Moreover, any
two equivalent allowed maps are at distance at most \(q_n\).
\end{lemma}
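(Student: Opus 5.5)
The plan is to derive everything from the dichotomy in Lemma~\ref{lem:cluster-scales}\textup{(ii)} together with the smallness assumptions $q_n,r_n,\varepsilon_n<1/100$. First I will show that $\sim$ is an equivalence relation on allowed maps $i\to j$ and prove the second assertion at the same time. Reflexivity and symmetry are clear, since $d_i$ is symmetric. If $b\sim c$, then $d_i(b,c)\le 1/5<1-q_n$, so Lemma~\ref{lem:cluster-scales}\textup{(ii)} forces $d_i(b,c)\le q_n$; this is the second assertion. For transitivity, $d_i$ satisfies the triangle inequality, because a point counted in $d_i(b,e)$ is counted in $d_i(b,c)$ or in $d_i(c,e)$. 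Hence $b\sim c\sim e$ gives $d_i(b,e)\le 2q_n\le 1/5$. Finiteness is immediate: there are finitely many objects and finitely many partial bijections between finite sets.

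Next I will check that composition is well defined. It should not depend on the chosen improvement, nor on the chosen representatives. Let $b\sim b'$ with $b,b':i\to j$ and $c\sim c'$ with $c,c':j\to k$. Then $cb$ and $c'b'$ differ at $x\in X_{n,i}$ only in one of four situations: $b,b'$ disagree or are undefined at $x$; $x\notin\operatorname{dom}(cb)$; or $c,c'$ disagree or are undefined at $b(x)$. Using the size comparison $|X_{n,j}|/|X_{n,i}|\le 1/(1-\varepsilon_n)$ from the proof of \textup{(iii)}, I expect
\[
d_i(cb,c'b')\le q_n+\frac{q_n}{1-\varepsilon_n}+O(\varepsilon_n).
\]
Let $u,u'$ be improvements of $cb$ and $c'b'$. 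By Lemma~\ref{lem:cluster-scales}\textup{(i)}, which applies because \textup{(iii)} gives the $K\varepsilon_n$-bounds, we have $d_i(u,cb)\le r_n$ and $d_i(u',c'b')\le r_n$. The triangle inequality then gives $d_i(u,u')\le 2r_n+3q_n+O(\varepsilon_n)<1/5$, so $[u]=[u']$.

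Then I will verify the groupoid axioms by the same pattern. Each axiom compares two allowed maps that are close to the same uncorrected composite; the distance is a sum of small terms, hence below $1/5$, and \textup{(ii)} upgrades this to $\le q_n$ if needed. The identity at $i$ is $[\mathrm{id}_{X_{n,i}}]$, which has all three defects zero. For $\gamma=[b]$, the composite of the identity with $b$ is $b$ itself, and its improvement is $r_n$-close to $b$. For associativity, with $\gamma=[b]$, $\delta=[c]$, $\eta=[e]$, both $(\eta\delta)\gamma$ and $\eta(\delta\gamma)$ are represented by maps within $O(r_n+\varepsilon_n)$ of $ecb$. For the first, if $v$ improves $ec$, then $vb$ is $O(r_n)$-close to $ecb$, and one more improvement adds $r_n$. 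The second is symmetric. For inverses, $b^{-1}$ satisfies the $K\varepsilon_n$-bounds by \textup{(iii)}, so it has an improvement $b^\ast$ that is allowed and $r_n$-close to $b^{-1}$. Then $b^\ast b$ agrees with $\mathrm{id}$ off a set of measure $O(r_n+\varepsilon_n)$, so its improvement is equivalent to the identity, and likewise for $bb^\ast$.

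The main obstacle is bookkeeping the normalizations. The distances are measured in $X_{n,i}$ but pulled back through $b$ from $X_{n,j}$ or $X_{n,k}$, so every transfer costs a factor $(1-\varepsilon_n)^{-1}$. Undefined points also have to be counted as disagreements. I will isolate one auxiliary estimate: for allowed $b$ and any partial maps $c,c'$ on $X_{n,j}$,
\[
d_i(cb,c'b)\le \lambda_{X_{n,i}}(b)+\frac{d_j(c,c')}{1-\varepsilon_n}.
\]
Together with the symmetric estimate $d_i(cb,cb')\le d_i(b,b')+\lambda_{X_{n,i}}(cb)$, this reduces every axiom check to summing at most a handful of terms, each $<1/100$.
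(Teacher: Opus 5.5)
Your proposal is correct and takes essentially the same route as the paper. The distance gap of Lemma~\ref{lem:cluster-scales}\textup{(ii)}, the triangle inequality and the composition estimate $d_i(cb,c'b')\le d_i(b,b')+\frac{|X_{n,j}|}{|X_{n,i}|}d_j(c,c')$ reduce each axiom to comparing allowed maps that are close to a common raw composite. You get independence of the inverse class from the representative for free, via uniqueness of two-sided inverses, where the paper checks it directly. The one slip is that reflexivity does not follow from symmetry: undefined points count as disagreements, so $d_i(b,b)=\lambda_{X_{n,i}}(b)$, and it is the bound $\lambda_{X_{n,i}}(b)\le\varepsilon_n<1/5$ that gives $b\sim b$.
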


\begin{proof}
The distance functions satisfy the triangle inequality.  We will also use the
elementary estimate
\[
        d_i(cb,c'b')
        \leq
        d_i(b,b')
        +\frac{|X_{n,j}|}{|X_{n,i}|}d_j(c,c').
\]
for partial bijections
$
        b,b':X_{n,i}\dashrightarrow X_{n,j}$ and $
        c,c':X_{n,j}\dashrightarrow X_{n,k}.
$
Indeed, outside the disagreement set of \(b,b'\), the two inner maps have a
common value, and the remaining exceptional points inject into the
disagreement set of \(c,c'\).

By Lemma~\ref{lem:cluster-scales}\textup{(ii)}, maps at distance at most
\(1/5\) are in fact at distance at most \(q_n\).  Reflexivity follows from
\(d_i(b,b)=\lambda_{X_{n,i}}(b)\leq\varepsilon_n<1/5\), and symmetry is
immediate.  If \(b\sim c\) and \(c\sim d\), then
\(d_i(b,d)\leq2/5<1-q_n\); the distance gap therefore gives
\(d_i(b,d)\leq q_n<1/5\).  Thus \(\sim\) is an equivalence relation.

Finiteness is immediate.  To prove that composition is well-defined, take
equivalent allowed maps
$
        b,b':X_{n,i}\dashrightarrow X_{n,j}$ and $
        c,c':X_{n,j}\dashrightarrow X_{n,k},
$
and improvements \(u,u'\) of \(cb,c'b'\), respectively.  By the size comparison in Lemma~\ref{lem:cluster-scales} and the
composition estimate above,
\[
        d_i(u,u')
        \leq
        2r_n+q_n+\frac{q_n}{1-\varepsilon_n}
        <\frac12.
\]
Both \(u,u'\) are allowed, so the distance gap gives
\(d_i(u,u')\leq q_n\).  This also covers different choices of improvement.

The identity at \(i\) is represented by
\(\operatorname{id}_{X_{n,i}}\).  An improvement of the raw product of an
identity with an allowed map \(b\) lies within \(r_n<1/2\) of \(b\), hence in
the same cluster by the distance gap.

Let \(b:X_{n,i}\dashrightarrow X_{n,j}\) be allowed.  By
Lemma~\ref{lem:cluster-scales}\textup{(iii)}, \(b^{-1}\) can be improved to
an allowed map \(\widehat b\); define
\([\,b\,]^{-1}=[\widehat b]\).  This is independent of the representative.
Indeed, if \(b\sim c\), then the image under \(b\) of the agreement set of
\(b,c\) is an agreement set for their inverses, and therefore
\[
        d_j(b^{-1},c^{-1})
        \leq
        1-(1-q_n)\frac{|X_{n,i}|}{|X_{n,j}|}
        \leq
        1-(1-q_n)(1-\varepsilon_n)
        \leq q_n+\varepsilon_n.
\]
After improving both inverses, their distance is at most
\(2r_n+q_n+\varepsilon_n<1/2\), and the distance gap puts them in the same
cluster.

Let \(u\) be an improvement of \(\widehat b b\).  Since
\(d_j(\widehat b,b^{-1})\leq r_n\),
\[
        d_i(\widehat b b,\operatorname{id}_{X_{n,i}})
        \leq
        \varepsilon_n+\frac{r_n}{1-\varepsilon_n}.
\]
Consequently,
\[
        d_i(u,\operatorname{id}_{X_{n,i}})
        \leq
        r_n+\varepsilon_n+\frac{r_n}{1-\varepsilon_n}
        <\frac12.
\]
The gap gives \([\widehat b][b]=[\operatorname{id}_{X_{n,i}}]\).  The other
identity follows similarly; in fact
\(d_j(b\widehat b,\operatorname{id}_{X_{n,j}})\leq r_n\).

It remains to prove associativity.  Let
$
        [b]:i\to j,
        [c]:j\to k$ and $
        [d]:k\to\ell
$
be composable.  Choose improvements \(u\) of \(cb\), \(v\) of \(dc\), and
then improvements \(p\) of \(du\), \(q\) of \(vb\).  Applying the composition estimate twice gives
\[
        d_i(du,dcb)
        \leq
        r_n+\frac{\varepsilon_n}{(1-\varepsilon_n)^2}
\]
and
\[
        d_i(vb,dcb)
        \leq
        q_n+\frac{r_n}{1-\varepsilon_n}.
\]
Hence
\[
        d_i(p,q)
        \leq
        3r_n+q_n
        +\frac{r_n}{1-\varepsilon_n}
        +\frac{\varepsilon_n}{(1-\varepsilon_n)^2}
        <\frac12.
\]
The maps \(p,q\) are allowed, so the distance gap makes them equivalent.
Thus the product is associative.
\end{proof}

\begin{definition}\label{def:full-group}
The finite full group \([[\cC_n]]\) is the group of total bisections of
\(\cC_n\).  Thus \(a\in[[\cC_n]]\) consists of a permutation
\(\bar a:I_n\to I_n\) and, for every \(i\in I_n\), an arrow
\(a_i:i\to \bar a(i).\)
Multiplication is defined as
\((ab)_i=a_{\bar b(i)}\,b_i .\)
\end{definition}

For every arrow \(\gamma:i\to j\) of \(\cC_n\), fix an
allowed representative
$
        \theta_n(\gamma):X_{n,i}\dashrightarrow X_{n,j},
$
choosing the identity for identity arrows.  For
\(a\in[[\cC_n]]\), patch the maps \(\theta_n(a_i)\) over all source
components.  On every connected component \(\Omega\) of \(\cC_n\), the
patched map has equally large source and range complements inside
\[
        X_{n,\Omega}:=\bigsqcup_{i\in\Omega}X_{n,i}.
\]
Complete it separately on each \(X_{n,\Omega}\), and denote the resulting
permutation by
$
        \widetilde\rho_n(a)\in\Sym(X_n).
$

\subsection{Recovering the centralizer by bisections}

For \(\sigma_n\in\Sym(X_n)\), write
\[
        \operatorname{com}_S(\sigma_n)
        =
        \sum_{s\in S}
        d_H(\sigma_n\alpha_n(s),\alpha_n(s)\sigma_n).
\]
For \(A\subseteq I_n\), write
\[
        \omega_n(A)=\sum_{i\in A}\frac{|X_{n,i}|}{|X_n|}.
\]
The following two-sided majority argument is adapted from the good/bad-edge
argument in \cite[Section~5]{KunThom}.

\begin{lemma}\label{lem:component-nonsplitting}
Let \(\sigma_n\in\Sym(X_n)\), and put
$
        c_n=\operatorname{com}_S(\sigma_n),
        \delta_n=\frac{c_n}{h}.
$
Then there are a subset \(L_n\subseteq I_n\) and an injective map
\(\tau_n:L_n\to I_n\) such that, for
\[
        D_i=X_{n,i}\cap\sigma_n^{-1}(X_{n,\tau_n(i)}),
        \qquad
        R_i=\sigma_n(D_i),
        \qquad i\in L_n,
\]
one has
\[
        |D_i|>\frac{|X_{n,i}|}{2},
        \qquad
        |R_i|>\frac{|X_{n,\tau_n(i)}|}{2},
\]
and
\begin{align*}
        \omega_n(I_n\setminus L_n)&\leq 3\delta_n,\quad
        \sum_{i\in L_n}\frac{|X_{n,i}\setminus D_i|}{|X_n|}
        &\leq \delta_n, \quad
        \sum_{i\in L_n}
        \frac{|X_{n,\tau_n(i)}\setminus R_i|}{|X_n|}
        &\leq \delta_n.
\end{align*}
\end{lemma}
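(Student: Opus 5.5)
The plan is to use the expansion of each component twice: once on the source side, to find where most of a component is sent, and once on the range side, to make that assignment injective. The common tool is the following cost-additivity estimate. Fix $i\in I_n$ and, for $j\in I_n$, put $P_{ij}=X_{n,i}\cap\sigma_n^{-1}(X_{n,j})$. The family $(P_{ij})_j$ partitions $X_{n,i}$. If $x\in P_{ij}$ and $s\in S$ with $sx\notin P_{ij}$, then, because $X_{n,i}$ is $\alpha_n(S)$-invariant, $\sigma_n(sx)\notin X_{n,j}$. But $s\sigma_n(x)\in X_{n,j}$, so $\sigma_n\alpha_n(s)x\neq\alpha_n(s)\sigma_n x$. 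Summing over $j$, $i$ and $s$, we get
\[
\sum_{i,j}|\partial_S P_{ij}|\le c_n|X_n| .
\]

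Next, define $\tau$ by majority on the source side. For each $i$, let $j(i)$ maximize $|P_{ij}|$. Applying the Cheeger bound inside $X_{n,i}$ gives $\sum_j h\min(|P_{ij}|,|X_{n,i}\setminus P_{ij}|)\le\sum_j|\partial_S P_{ij}|$. If no $P_{ij}$ has more than half of $X_{n,i}$, the left side is at least $h|X_{n,i}|$. Otherwise the left side is at least $h|X_{n,i}\setminus P_{i,j(i)}|$ after comparing the big piece with the union of the small ones; equivalently, the small pieces all lie in the complement of the big one. So let $L^0$ be the set of components with a majority piece. Then
\[
h\,\omega_n(I_n\setminus L^0)|X_n|+h\sum_{i\in L^0}|X_{n,i}\setminus D_i|\le c_n|X_n| ,
\]
where $D_i=P_{i,j(i)}$. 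This gives the source bound $\delta_n$, and also $\omega_n(I_n\setminus L^0)\le\delta_n$.

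The range side is handled symmetrically with $\sigma_n^{-1}$. Put $Q_{ji}=\sigma_n(P_{ij})$, which partitions $X_{n,j}$ as $i$ varies. The commutator estimate for $\sigma_n^{-1}$ is the same, since $d_H(\sigma^{-1}\alpha(s),\alpha(s)\sigma^{-1})=d_H(\alpha(s)\sigma,\sigma\alpha(s))$. So the range-side Cheeger argument yields majority pieces $Q_{j,i(j)}$ on a set $L'$ of targets, with the analogous bounds. Now let $L_n$ be the set of $i\in L^0$ with $j(i)\in L'$ and $i(j(i))=i$, and set $\tau_n(i)=j(i)$. The map $\tau_n$ is injective because $i(\cdot)$ inverts it. On $L_n$ we have $D_i=P_{i\tau(i)}$ and $R_i=Q_{\tau(i),i}$, both majority pieces, so $|D_i|>|X_{n,i}|/2$ and $|R_i|>|X_{n,\tau_n(i)}|/2$. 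The two sum bounds follow from the two Cheeger inequalities restricted to $L_n$.

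The main obstacle is the estimate $\omega_n(I_n\setminus L_n)\le 3\delta_n$. A component $i\in L^0\setminus L_n$ fails for one of two reasons: $j(i)\notin L'$, or $j(i)\in L'$ but its range-majority is some $i'\neq i$. In the second case $R$-side majority forces $|Q_{j(i),i}|\le|X_{n,j(i)}\setminus Q_{j(i),i'}|$, so the mass $|D_i|$ is charged to range-side defect. In the first case $|D_i|$ lies inside a target component with no majority, so it is charged to the mass of $I_n\setminus L'$. Since $|D_i|>|X_{n,i}|/2$, each such $i$ costs at most twice its charge. To keep the constant at $3$ rather than something larger, I would combine the two Cheeger inequalities into the single combined count rather than bounding them separately: $\sigma_n$ is measure preserving, so the total range-side defect plus the mass of $I_n\setminus L'$ is at most $\delta_n$. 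With this bookkeeping, $I_n\setminus L^0$ contributes $\delta_n$ and the failures contribute at most $2\delta_n$, for a total of $3\delta_n$. This is where I expect the most care to be needed.
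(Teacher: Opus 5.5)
Your proof is correct and is essentially the paper's argument: your $L_n$ and $\tau_n$ (mutual strict majorities) coincide with the paper's, your combined source- and range-side Cheeger counts are exactly the paper's bounds $|B_n^+|,|B_n^-|\le\delta_n|X_n|$ on the unions of all minority cells, and charging each $i\in L^0\setminus L_n$ to the disjoint cells $Q_{j(i),i}$ reproduces the paper's estimate $|B_n^+|+2|B_n^-|\le 3\delta_n|X_n|$. The only slip is in the justification: the combined range-side bound comes from the $\sigma_n^{-1}$-version of your displayed Cheeger inequality, not from measure preservation. Bijectivity of $\sigma_n$ is what gives $|D_i|=|Q_{j(i),i}|$ and the disjointness of the charged cells.
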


\begin{proof}
For \(i,j\in I_n\), set
\[
        P_{i,j}=X_{n,i}\cap\sigma_n^{-1}(X_{n,j}),
        \qquad
        Q_{j,i}=\sigma_n(P_{i,j})
        =X_{n,j}\cap\sigma_n(X_{n,i}).
\]
For fixed \(i\), the sets \(P_{i,j}\) partition \(X_{n,i}\), and for fixed
\(j\), the sets \(Q_{j,i}\) partition \(X_{n,j}\).

Let
$
        \partial_i A=\{(x,s)\in A\times S:sx\notin A\}
$
be the directed \(S\)-edge boundary inside \(X_{n,i}\).  If
\((x,s)\in\partial_iP_{i,j}\), then
\(\sigma_n(sx)\neq s\sigma_n(x)\).  Hence
\[
        \sum_{i,j}|\partial_iP_{i,j}|
        \leq c_n|X_n|.
\]
Define the forward-bad set
\[
        B_n^+
        =
        \bigsqcup_{\substack{i,j\in I_n\\
        |P_{i,j}|\leq |X_{n,i}|/2}}P_{i,j}.
\]
Each cell in this union has size at most half of its component, so expansion
and the preceding boundary estimate give
\[
        h|B_n^+|
        \leq
        \sum_{\substack{i,j\in I_n\\
        |P_{i,j}|\leq |X_{n,i}|/2}}
        |\partial_iP_{i,j}|
        \leq c_n|X_n|.
\]
Thus \(|B_n^+|\leq\delta_n|X_n|\).

Apply the same argument to \(\sigma_n^{-1}\).  Since
\(\operatorname{com}_S(\sigma_n^{-1})=
\operatorname{com}_S(\sigma_n)\), the corresponding backward-bad set
\[
        B_n^-
        =
        \bigsqcup_{\substack{j,i\in I_n\\
        |Q_{j,i}|\leq |X_{n,j}|/2}}Q_{j,i}
\]
satisfies \(|B_n^-|\leq\delta_n|X_n|\).

Let \(L_n\) consist of those \(i\in I_n\) for which there is a \(j\in I_n\)
such that
\[
        |P_{i,j}|>\frac{|X_{n,i}|}{2}
        \quad\text{and}\quad
        |Q_{j,i}|>\frac{|X_{n,j}|}{2},
\]
and denote this unique \(j\) by \(\tau_n(i)\).  The map \(\tau_n\) is
injective: if \(\tau_n(i)=\tau_n(i')=j\), then the disjoint sets
\(Q_{j,i}\) and \(Q_{j,i'}\) both have size greater than \(|X_{n,j}|/2\),
so \(i=i'\).

If no cell \(P_{i,j}\) has size greater than \(|X_{n,i}|/2\), then
\(X_{n,i}\subseteq B_n^+\).  For every remaining \(i\notin L_n\), let \(j\)
be its unique forward-majority target.  Then \(Q_{j,i}\subseteq B_n^-\) and
$
        |X_{n,i}|<2|P_{i,j}|=2|Q_{j,i}|.
$
Since the cells \(Q_{j,i}\) are pairwise disjoint, summing these two cases
gives
\[
        \sum_{i\in I_n\setminus L_n}|X_{n,i}|
        \leq |B_n^+|+2|B_n^-|
        \leq 3\delta_n|X_n|.
\]

For \(i\in L_n\), every cell \(P_{i,j}\) with \(j\neq\tau_n(i)\) is
forward-bad.  Therefore
\[
        \bigsqcup_{i\in L_n}(X_{n,i}\setminus D_i)
        \subseteq B_n^+,
\]
which gives the second estimate.  Similarly, every cell
\(Q_{\tau_n(i),k}\) with \(k\neq i\) is backward-bad.  Since \(\tau_n\) is
injective,
\[
        \bigsqcup_{i\in L_n}(X_{n,\tau_n(i)}\setminus R_i)
        \subseteq B_n^-,
\]
which gives the third estimate.
\end{proof}

\begin{proposition}\label{prop:metric-groupoid-recovery}
For the fixed maps
\(\widetilde\rho_n:[[\cC_n]]\to\Sym(X_n)\) defined above, the following
hold.

\begin{enumerate}[label=\textup{(\alph*)}]
\item
\[
        \sup_{a,b\in[[\cC_n]]}
        d_H\bigl(
        \widetilde\rho_n(ab),
        \widetilde\rho_n(a)\widetilde\rho_n(b)
        \bigr)
        \leq q_n+r_n.
\]

\item
\[
        \sup_{a\in[[\cC_n]]}
        \sum_{s\in S}
        d_H\bigl(
        \widetilde\rho_n(a)\alpha_n(s),
        \alpha_n(s)\widetilde\rho_n(a)
        \bigr)
        \leq\varepsilon_n.
\]

\item If \(\sigma=[\sigma_n]_{\cU}\in C(\pi(G))\), then there are
\(a_n\in[[\cC_n]]\) such that
\[
        d_H\bigl(\sigma_n,\widetilde\rho_n(a_n)\bigr)
        \to_{\cU}0.
\]
\end{enumerate}
Consequently, the centralizer consists exactly of the ultraproduct classes
represented by sequences \(\widetilde\rho_n(a_n)\),
\(a_n\in[[\cC_n]]\).
\end{proposition}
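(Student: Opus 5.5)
Parts (a) and (b) are direct estimates component by component, and (c) is a construction that combines Lemma~\ref{lem:component-nonsplitting} with the repair step of Lemma~\ref{lem:cluster-scales}\textup{(i)}.

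For \textup{(a)}, fix \(a,b\in[[\cC_n]]\) and a source component \(X_{n,i}\). On \(X_{n,i}\), \(\widetilde\rho_n(a)\widetilde\rho_n(b)\) agrees with the raw composite \(\theta_n(a_{\bar b(i)})\theta_n(b_i)\) except at points outside the domains or at completion points. By definition, \((ab)_i\) is the class of an improvement \(u\) of this composite, and \(\theta_n(ab)_i\) is a representative of that class. Lemma~\ref{lem:cluster-groupoid} then bounds \(d_i(\theta_n((ab)_i),u)\) by \(q_n\), and Lemma~\ref{lem:cluster-scales}\textup{(i)} bounds \(d_i(u,\text{raw})\) by \(r_n\). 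Undefined points are counted as disagreements in \(d_i\), so completion points are already covered. Weighting by \(|X_{n,i}|/|X_n|\) and summing over \(i\) gives the bound \(q_n+r_n\).

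For \textup{(b)}, note that \(\widetilde\rho_n(a)\) commutes with \(\alpha_n(s)\) at \(x\) whenever \((x,s)\) is not counted in \(\Delta_S(\theta_n(a_i))\). Each allowed map has \(\Delta_S\le\varepsilon_n\) on its source component. Summing \(|X_{n,i}|\,\Delta_S\) over \(i\) and dividing by \(|X_n|\) bounds the commutator sum by \(\varepsilon_n\). Completion points need no separate count: a point outside the domain contributes all of its pairs to the defect, so these are included.

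For \textup{(c)}, take \(\sigma\in C(\pi(G))\), so \(c_n=\operatorname{com}_S(\sigma_n)\to_\cU0\), and apply Lemma~\ref{lem:component-nonsplitting} to obtain \(L_n\), \(\tau_n\), \(D_i\) and \(R_i\). For \(i\in L_n\), let \(b_i=\sigma_n|_{D_i}\colon X_{n,i}\dashrightarrow X_{n,\tau_n(i)}\). On average the defects \(\lambda\) are controlled by \(\delta_n\), and \(\Delta_S(b_i)\) is controlled by the commutator defect of \(\sigma_n\) restricted to \(X_{n,i}\) plus \(|S|\lambda(b_i)\).

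The main obstacle is that these bounds hold only on average, while allowed maps must satisfy \(\varepsilon_n\)-bounds uniformly on every component. I would handle this with a second threshold. Call \(i\in L_n\) good when all three defects of \(b_i\) are at most \(K\varepsilon_n\). By Markov's inequality, the bad components carry \(\omega_n\)-mass at most \(O(c_n/(h\varepsilon_n))\). Since only \(\varepsilon_n\) is fixed in advance, I would rescale first: enlarge \(\varepsilon_n\) along \(\cU\) if necessary, which is harmless in the scale choice, so that \(c_n/\varepsilon_n\to_\cU0\).

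For a good \(i\), Lemma~\ref{lem:cluster-scales}\textup{(i)} repairs \(b_i\) to an allowed \(b_i'\) with \(d_i(b_i,b_i')\le r_n\), and we set \(a_{n,i}=[b_i']\). The map \(\tau_n\), restricted to the good components, is injective. I would extend it to a permutation \(\bar a_n\) of \(I_n\) that stays inside the connected components of \(\cC_n\), assigning identity arrows where possible and arbitrary arrows within a groupoid component otherwise. The mismatched components then have small total mass, by the mass bounds of Lemma~\ref{lem:component-nonsplitting} together with the bound on bad components.

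Finally, \(\sigma_n\) and \(\widetilde\rho_n(a_n)\) differ only on the bad components, on the sets \(X_{n,i}\setminus D_i\), on \(r_n\)-fractions of the good components, and on a \(q_n\)-fraction from the change of representative. All of these tend to \(0\) along \(\cU\). The concluding ``consequently'' then follows from \textup{(b)}: sequences \(\widetilde\rho_n(a_n)\) commute asymptotically with every generator \(\alpha_n(s)\), so their classes lie in \(C(\pi(G))\), and \textup{(c)} gives the reverse inclusion.
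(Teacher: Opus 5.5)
Your parts \textup{(a)} and \textup{(b)}, and the derivation of the final ``consequently'' from \textup{(b)} and \textup{(c)}, are fine and essentially match the paper. The gap is in \textup{(c)}, at exactly the obstacle you identified.

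Thresholding at $K\varepsilon_n$ so that Lemma~\ref{lem:cluster-scales}\textup{(i)} applies means you need $c_n=o(\varepsilon_n)$. Enlarging $\varepsilon_n$ along $\cU$ to arrange this is not allowed. The scale $\varepsilon_n$ defines $\cC_n$, $[[\cC_n]]$ and $\widetilde\rho_n$, which are fixed before $\sigma$ is given. The concluding exhaustion statement, used later in Proposition~\ref{prop:exactification}, needs a single family that serves every element of $C(\pi(G))$ and every representative of it. No fixed choice $\varepsilon_n\to_{\cU}0$ can do this.

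For instance, the identity of $C(\pi(G))$ is represented by a $\sigma_n$ that, inside each component, transposes pairs of points drawn from an $S$-independent set of density $\eta_n$, where $\eta_n\to0$ but $\eta_n/\varepsilon_n\to\infty$. Then $\tau_n(i)=i$ and $D_i=X_{n,i}$, but $\Delta_S(b_i)\geq|S|\eta_n>K\varepsilon_n$ on every component. So every component is bad, and your Markov bound $O(c_n/(h\varepsilon_n))$ gives nothing. You also have no a priori way to choose a representative with $c_n=o(\varepsilon_n)$.

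The missing idea is to use Proposition~\ref{prop:two-component-repair} directly rather than its rescaled form in Lemma~\ref{lem:cluster-scales}\textup{(i)}. Two features of that proposition matter:
\begin{itemize}
\item Its input threshold $\rho$ depends only on $G,S,h$, not on the target precision.
\item Its output error $C_1\Delta_S(b)+\zeta$ is linear in the actual defect of the input, rather than a uniform $r_n$.
\end{itemize}
So let $K_n$ consist of those $i\in L_n$ with $\lambda_{X_{n,\tau_n(i)}}(b_i)\leq\rho$ and $\Delta_S(b_i)\leq\rho$. The source bound is automatic, since $\lambda_{X_{n,i}}(b_i)\leq\Delta_S(b_i)$.

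Markov's inequality at the fixed level $\rho$ gives $\omega_n(I_n\setminus K_n)\leq 3\delta_n+2\delta_n/\rho+\Xi_n/\rho\to_{\cU}0$, where $\Xi_n=c_n+|S|\delta_n$. This holds no matter how $c_n$ compares with $\varepsilon_n$.

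On $K_n$, apply the proposition with $\varepsilon=\zeta=\varepsilon_n$. Its local hypotheses hold because the good components were selected in Lemma~\ref{lem:cluster-scales} with exactly the $R,\delta$ belonging to these parameters. This gives allowed maps $b_i'$ with $d_i(b_i',b_i)\leq C_1\Delta_S(b_i)+\varepsilon_n$, and the weighted sum of these errors is at most $C_1\Xi_n+\varepsilon_n$.

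With this change, the rest of your argument goes through as you describe: extend $\tau_n|_{K_n}$ inside the connected components of $\cC_n$, and pay $q_n$ for the change of representative.
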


\begin{proof}
We first prove \textup{(a)}.  Let \(a,b\in[[\cC_n]]\).  On the component
\(X_{n,i}\), let \(u_i\) be an improvement of the ordinary composition before correction
$
        \theta_n(a_{\bar b(i)})\theta_n(b_i).
$
By the definition of multiplication in \(\cC_n\), both \(u_i\) and
\(\theta_n((ab)_i)\) represent the arrow \((ab)_i\).  Hence
Lemma~\ref{lem:cluster-groupoid} and
Lemma~\ref{lem:cluster-scales}\textup{(i)} give
\[
\begin{aligned}
        d_i\bigl(
        \theta_n((ab)_i),
        \theta_n(a_{\bar b(i)})\theta_n(b_i)
        \bigr)
        &\leq
        d_i(\theta_n((ab)_i),u_i)
        +d_i\bigl(u_i,
        \theta_n(a_{\bar b(i)})\theta_n(b_i)\bigr)
        \leq q_n+r_n.
\end{aligned}
\]
Whenever the two partial maps in this comparison are defined and equal at
\(x\), the chosen permutation completions are also equal at \(x\).
Therefore no additional completion term is needed, and summing over the
components proves \textup{(a)}.

For \textup{(b)}, write \(\bar a:I_n\to I_n\) for the object permutation of
\(a\).  If
$
        x,\alpha_n(s)x\in D(\theta_n(a_i))
$
and
$
        \theta_n(a_i)(\alpha_n(s)x)
        =\alpha_n(s)\theta_n(a_i)(x),
$
then the completion \(\widetilde\rho_n(a)\) commutes with \(\alpha_n(s)\)
at \(x\).  Hence
\[
\begin{aligned}
        \sum_{s\in S}
        d_H\bigl(
        \widetilde\rho_n(a)\alpha_n(s),
        \alpha_n(s)\widetilde\rho_n(a)
        \bigr)
        &\leq
        \sum_{i\in I_n}
        \frac{|X_{n,i}|}{|X_n|}
        \Delta_S(\theta_n(a_i))
        \leq\varepsilon_n.
\end{aligned}
\]

We prove \textup{(c)}.  Put
$
        c_n=\operatorname{com}_S(\sigma_n)$ and $
        \delta_n=\frac{c_n}{h}.
$
Then \(c_n,\delta_n\to_{\cU}0\).  Apply
Lemma~\ref{lem:component-nonsplitting}, retaining its notation
\(L_n,\tau_n,D_i,R_i\), and write
$
        \sigma_{n,i}:D_i\to R_i
$
for the restriction of \(\sigma_n\).

If a pair \((x,s)\) is counted by \(\Delta_S(\sigma_{n,i})\) and
\(x\in D_i\), then it is an actual commutator defect.  Indeed, if
\(sx\notin D_i\), then \(s\sigma_n(x)\) lies in
\(X_{n,\tau_n(i)}\), whereas \(\sigma_n(sx)\) does not.  Consequently,
\[
\begin{aligned}
        \sum_{i\in L_n}
        \frac{|X_{n,i}|}{|X_n|}\Delta_S(\sigma_{n,i})
        &\leq
        c_n+|S|
        \sum_{i\in L_n}
        \frac{|X_{n,i}\setminus D_i|}{|X_n|}
        \leq c_n+|S|\delta_n.
\end{aligned}
\]
Put
$
        \Xi_n:=c_n+|S|\delta_n.
$

Let \(K_n\subseteq L_n\) consist of those \(i\) for which
$
        \lambda_{X_{n,\tau_n(i)}}(\sigma_{n,i})\leq\rho$ and $
        \Delta_S(\sigma_{n,i})\leq\rho.
$
The source-defect hypothesis in Proposition~\ref{prop:two-component-repair}
is automatic here: every point outside \(D_i\) contributes all \(|S|\)
pairs based at that point, so
$
        \lambda_{X_{n,i}}(\sigma_{n,i})
        \leq\Delta_S(\sigma_{n,i}).
$
Since \(D_i\) and \(R_i\) are strict majorities of equal cardinality,
$
        |X_{n,i}|<2|X_{n,\tau_n(i)}|.
$
The range-defect estimate and Markov's inequality therefore give
\[
        \omega_n\bigl(\{i\in L_n:
        \lambda_{X_{n,\tau_n(i)}}(\sigma_{n,i})>\rho\}\bigr)
        \leq\frac{2\delta_n}{\rho},
\]
while
\[
        \omega_n\bigl(\{i\in L_n:
        \Delta_S(\sigma_{n,i})>\rho\}\bigr)
        \leq\frac{\Xi_n}{\rho}.
\]
Together with Lemma~\ref{lem:component-nonsplitting}, this yields
\[
        \omega_n(I_n\setminus K_n)
        \leq
        3\delta_n+\frac{2\delta_n}{\rho}+\frac{\Xi_n}{\rho}
        \to_{\cU}0.
\]

For \(i\in K_n\), apply
Proposition~\ref{prop:two-component-repair} with
\(\varepsilon=\zeta=\varepsilon_n\).  The choice of the good components in
Lemma~\ref{lem:cluster-scales} ensures that its local hypotheses hold.  We
obtain an allowed partial bijection
$
        b_{n,i}:X_{n,i}\dashrightarrow X_{n,\tau_n(i)}
$
such that
$
        d_i(b_{n,i},\sigma_{n,i})
        \leq C_1\Delta_S(\sigma_{n,i})+\varepsilon_n.
$
Let \(\gamma_{n,i}:i\to\tau_n(i)\) be its arrow in \(\cC_n\).  The arrows
\(\gamma_{n,i}\), \(i\in K_n\), form a partial bisection.

For each connected component \(\Omega\) of \(\cC_n\), choose a bijection
\[
        \rho_{n,\Omega}:
        \Omega\setminus(K_n\cap\Omega)
        \longrightarrow
        \Omega\setminus\tau_n(K_n\cap\Omega).
\]
This is possible because \(\tau_n\) is injective.  For
\(i\in\Omega\setminus K_n\), choose an arrow
\(\delta_{n,i}:i\to\rho_{n,\Omega}(i)\), and define a total bisection
\(a_n\in[[\cC_n]]\) by
\[
        (a_n)_i=
        \begin{cases}
        \gamma_{n,i},&i\in K_n,\\
        \delta_{n,i},&i\notin K_n.
        \end{cases}
\]

For \(i\in K_n\), the maps \(b_{n,i}\) and
\(\theta_n(\gamma_{n,i})\) represent the same arrow, so
Lemma~\ref{lem:cluster-groupoid} gives
$
        d_i\bigl(b_{n,i},\theta_n(\gamma_{n,i})\bigr)\leq q_n.
$
As before, undefined points are already counted by \(d_i\), so the arbitrary
completion used in \(\widetilde\rho_n(a_n)\) contributes no further term.
Thus
\[
\begin{aligned}
        d_H\bigl(\sigma_n,\widetilde\rho_n(a_n)\bigr)
        &\leq
        \omega_n(I_n\setminus K_n)+
        \sum_{i\in K_n}\frac{|X_{n,i}|}{|X_n|}
        \left(
        d_i(\sigma_{n,i},b_{n,i})
        +d_i\bigl(b_{n,i},\theta_n(\gamma_{n,i})\bigr)
        \right)\\
        &\leq
        \omega_n(I_n\setminus K_n)
        +C_1\Xi_n+\varepsilon_n+q_n
        \to_{\cU}0.
\end{aligned}
\]
This proves \textup{(c)} and the final assertion.
\end{proof}

\begin{corollary}\label{cor:conull-groupoid-component}
Assume that \(C(\pi(G))\) acts ergodically on the associated Loeb probability
space.  Then there are connected components \(\Omega_n\subseteq I_n\) of
\(\cC_n\) such that
$
        \omega_n(\Omega_n)\to_{\cU}1.
$
Moreover, for every \(i,j\in\Omega_n\),
\[
        1-\varepsilon_n
        \leq\frac{|X_{n,i}|}{|X_{n,j}|}
        \leq\frac{1}{1-\varepsilon_n}.
\]
\end{corollary}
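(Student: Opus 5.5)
The plan is to mirror Lemma~\ref{lem:conull-orbit}, using the groupoid components in place of orbits and Proposition~\ref{prop:metric-groupoid-recovery} to control the centralizer. For each \(n\), the sets
\[
        X_{n,\Omega}=\bigsqcup_{i\in\Omega}X_{n,i},
\]
with \(\Omega\) running over the connected components of \(\cC_n\), partition \(X_n\). By construction \(\widetilde\rho_n(a)\) preserves every \(X_{n,\Omega}\), since the bisection \(a\) permutes objects only inside components and the completion is done separately on each \(X_{n,\Omega}\).

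Suppose no component has \(\omega_n\)-mass tending to \(1\) along \(\cU\). Then there are \(\delta>0\) and a \(\cU\)-large set of indices on which every component has mass at most \(1-\delta\). On those indices, add whole components greedily, exactly as in Lemma~\ref{lem:conull-orbit}, to get a union \(Y_n\) of sets \(X_{n,\Omega}\) with
\[
        \delta/2\leq |Y_n|/|X_n|\leq 1-\delta/2.
\]
The step needing care is that a single component may be large, but its mass is at most \(1-\delta\), so the greedy bound still holds.

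Let \(Y=[Y_n]_{\cU}\). I claim \(Y\) is invariant under \(C(\pi(G))\) up to null sets. Take \(\sigma\in C(\pi(G))\). By Proposition~\ref{prop:metric-groupoid-recovery}(c), \(\sigma=[\widetilde\rho_n(a_n)]_{\cU}\) for some \(a_n\in[[\cC_n]]\). Each \(\widetilde\rho_n(a_n)\) maps \(Y_n\) exactly onto itself, so \(\sigma Y=Y\) in the Loeb space. The Loeb measure of \(Y\) lies in \([\delta/2,1-\delta/2]\), which contradicts ergodicity. Lemma~\ref{lem:null-modification} justifies that ergodicity survives the earlier essential-equivalence modifications (passing to good components, Kun's decomposition). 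So components \(\Omega_n\) with \(\omega_n(\Omega_n)\to_{\cU}1\) exist.

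For the size comparison, let \(i,j\in\Omega_n\). Connectedness gives an arrow \(i\to j\), represented by an allowed \(b\). Then \(|\operatorname{dom} b|=|\operatorname{ran} b|\), and both complements have relative size at most \(\varepsilon_n\). This gives
\[
        (1-\varepsilon_n)|X_{n,i}|\leq|X_{n,j}|,
\]
and symmetrically with \(i\) and \(j\) exchanged. Together these are the stated inequalities; this is the size comparison already recorded in the proof of Lemma~\ref{lem:cluster-scales}(iii).

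The main point to check is that the representation in Proposition~\ref{prop:metric-groupoid-recovery}(c) holds for every element of the centralizer. This ensures the whole group \(C(\pi(G))\) preserves \(Y\), not just a subset of it.
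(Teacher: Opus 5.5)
Your proposal is correct and follows essentially the same route as the paper. A greedy union of whole groupoid components is mapped exactly onto itself by every \(\widetilde\rho_n(a_n)\), so by Proposition~\ref{prop:metric-groupoid-recovery}\textup{(c)} it is invariant under the entire centralizer, which contradicts ergodicity; the size comparison then comes from an allowed representative of an arrow \(i\to j\). The only cosmetic difference is that the paper gets both inequalities from a single arrow, using \(|\operatorname{dom} b|=|\operatorname{ran} b|\) together with both defect bounds, whereas you use an arrow in each direction.
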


\begin{proof}
Let \(A_n\subseteq I_n\) be any union of connected components of \(\cC_n\),
and put
\[
        Z_n=\bigsqcup_{i\in A_n}X_{n,i}.
\]
Every total bisection preserves \(A_n\) at the object level, and the
componentwise completion chosen above therefore preserves \(Z_n\) exactly.
Proposition~\ref{prop:metric-groupoid-recovery} then shows that
\([Z_n]_{\cU}\) is invariant under the whole centralizer.
Ergodicity implies that every ultralimit of such unions has measure \(0\) or
\(1\).

If no sequence of connected components had weight tending to \(1\), there
would be \(\delta>0\) and a \(\cU\)-large set of indices on which every
connected component has weight at most \(1-\delta\).  Therefore there are unions \(A_n\) of connected components with
\[
        \frac{\delta}{2}
        \leq\omega_n(A_n)
        \leq1-\frac{\delta}{2},
\]
contradicting ergodicity.  This proves the first assertion.

If \(i,j\in\Omega_n\), we have an arrow
\(i\to j\).  An allowed representative of that arrow has a domain of size at
least \((1-\varepsilon_n)|X_{n,i}|\) and a range of the same size, missing at
most \(\varepsilon_n|X_{n,j}|\).  The stated size comparison follows.
\end{proof}

\begin{remark}
For a connected component \(\Omega\) of \(\cC_n\), choosing a base object
\(o\in\Omega\) and arrows from \(o\) to the other objects gives a noncanonical
isomorphism
\[
        [[\cC_n|_{\Omega}]]
        \cong
        \cC_n(o,o)^{\Omega}\rtimes\Sym(\Omega).
\]
Thus each connected factor of the finite full group is a wreath product of a
isotropy group with a symmetric group.
\end{remark}

\section{Stability and proof of the main theorem}

It remains to correct the approximate actions to genuine actions
\(\widetilde\rho_n:[[\cC_n]]\to\Sym(X_n)\) from
Proposition~\ref{prop:metric-groupoid-recovery}.  We use Becker--Chapman
flexible stability \cite{BeckerChapman} in the following uniform variant: for
every \(\varepsilon>0\) there exists \(\delta>0\) such that, for \emph{every}
finite group \(\Gamma\) and every map \(f:\Gamma\to\Sym(V)\) with
$
        \sup_{a,b\in\Gamma}
        d_H(f(ab),f(a)f(b))\leq\delta,
$
there are a finite set \(V'\supseteq V\) with
\(|V'\setminus V|\leq\varepsilon|V'|\) and a homomorphism
\(\bar f:\Gamma\to\Sym(V')\) such that, after extending \(f(a)\) by the
identity on \(V'\setminus V\),
$
        \sup_{a\in\Gamma} d_H(\bar f(a),f(a))\leq\varepsilon.
$
Applied to a sequence
\(\Gamma_m\) of finite groups and maps \(f_m:\Gamma_m\to\Sym(V_m)\) with
$$\sup_{a,b\in\Gamma_m}d_H(f_m(ab),f_m(a)f_m(b))\to0,$$ it produces finite
sets \(V'_m\supseteq V_m\) with \(|V'_m\setminus V_m|/|V'_m|\to0\) and
homomorphisms \(\bar f_m:\Gamma_m\to\Sym(V'_m)\) with
\(\sup_{a\in\Gamma_m}d_H(\bar f_m(a),f_m(a))\to0\); the same holds with
ordinary limits replaced by limits along an ultrafilter.

\begin{proposition}\label{prop:exactification}
Let
\(F_n=[[\cC_n]].\)
There are finite sets \(Y_n\supseteq X_n\) with
\[
        \frac{|Y_n\setminus X_n|}{|Y_n|}\to_{\cU}0
\]
and homomorphisms
\(\rho_n:F_n\to\Sym(Y_n)\)
such that, after extending each \(\alpha_n(s)\) by the identity on
\(Y_n\setminus X_n\),
\(C(\pi(G))=\prod_{\cU}\rho_n(F_n)\)
inside \(\prod_{\cU}\Sym(Y_n)\).
\end{proposition}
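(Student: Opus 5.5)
We apply the uniform Becker--Chapman flexible stability to the maps \(f_n=\widetilde\rho_n:F_n\to\Sym(X_n)\). By Proposition~\ref{prop:metric-groupoid-recovery}\textup{(a)}, the uniform multiplicativity defect is at most \(q_n+r_n\to_{\cU}0\). We therefore obtain finite sets \(Y_n\supseteq X_n\) with \(|Y_n\setminus X_n|/|Y_n|\to_{\cU}0\) and homomorphisms \(\rho_n:F_n\to\Sym(Y_n)\) such that
\[
\eta_n:=\sup_{a\in F_n}d_H\bigl(\rho_n(a),\widetilde\rho_n(a)\bigr)\to_{\cU}0,
\]
where \(\widetilde\rho_n(a)\) is extended by the identity on \(Y_n\setminus X_n\). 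We also extend \(\alpha_n(s)\) by the identity on \(Y_n\setminus X_n\). This is an essentially equivalent model in the sense of Definition~\ref{def:essentially}. By Lemma~\ref{lem:null-modification}, the Loeb space, the embedding \(\pi\) and its centralizer are canonically identified with those for \(X_n\). In particular, an element of \(\prod_{\cU}\Sym(Y_n)\) centralizes \(\pi(G)\) if and only if it preserves \(X_n\) up to a \(\cU\)-null error and its restriction there centralizes \(\pi(G)\). I would check this directly: the added points are fixed by all \(\alpha_n(s)\) and form a null set, so they do not affect membership in the ultraproduct centralizer.

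Next I prove \(\prod_{\cU}\rho_n(F_n)\subseteq C(\pi(G))\). Let \(a_n\in F_n\). Since \(\eta_n\) is a uniform bound, \([\rho_n(a_n)]_{\cU}=[\widetilde\rho_n(a_n)]_{\cU}\) in \(\prod_{\cU}\Sym(Y_n)\). By Proposition~\ref{prop:metric-groupoid-recovery}\textup{(b)},
\[
\sum_{s}d_H\bigl(\widetilde\rho_n(a_n)\alpha_n(s),\alpha_n(s)\widetilde\rho_n(a_n)\bigr)\le\varepsilon_n,
\]
and the extra points change this by at most \(O(|Y_n\setminus X_n|/|Y_n|)\). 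Hence the class commutes with the image of every generator \(s\in S\), and therefore with all of \(\pi(G)\).

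For the reverse inclusion, take \(\sigma\in C(\pi(G))\). Represent it on \(X_n\) by Lemma~\ref{lem:null-modification}, after a modification on a null set that makes \(\sigma_n\) preserve \(X_n\). Proposition~\ref{prop:metric-groupoid-recovery}\textup{(c)} gives \(a_n\in F_n\) with \(d_H(\sigma_n,\widetilde\rho_n(a_n))\to_{\cU}0\) on \(X_n\). The same then holds on \(Y_n\), because the extra points have vanishing proportion. Combined with \(\eta_n\to_{\cU}0\), this gives \(\sigma=[\rho_n(a_n)]_{\cU}\in\prod_{\cU}\rho_n(F_n)\). Note that \(\prod_{\cU}\rho_n(F_n)\) is the metric ultraproduct of the subgroups \(\rho_n(F_n)\leq\Sym(Y_n)\). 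This is exactly the form needed for Theorem~\ref{thm:groupoid-recovery}, with \(A_n=\rho_n(F_n)\).

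The main obstacle is the appeal to stability. Becker--Chapman must hold uniformly over \emph{all} finite groups, with the defect measured as a supremum over all pairs \((a,b)\); the groups \(F_n\) vary and grow, so this uniformity is essential. The paper states the uniform variant as a given, and I would use it as stated. Everything else is bookkeeping: controlling the uniform sup-distance \(\eta_n\) rather than an average, and checking that the enlargement \(Y_n\supseteq X_n\) does not disturb the identification of centralizers. Both points follow from the uniform form of the stability statement and from \(|Y_n\setminus X_n|/|Y_n|\to_{\cU}0\).
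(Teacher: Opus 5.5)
Your proposal is correct and follows the paper's proof essentially step for step. You apply uniform Becker--Chapman stability to \(\widetilde\rho_n\) using the defect bound of Proposition~\ref{prop:metric-groupoid-recovery}\textup{(a)}. You get \(\prod_{\cU}\rho_n(F_n)\subseteq C(\pi(G))\) from the uniform commutation bound \textup{(b)} together with the uniform closeness of \(\rho_n\) to \(\widetilde\rho_n\). You get the reverse inclusion from \textup{(c)} after modifying \(\sigma_n\) on a null set so that it preserves \(X_n\). The only cosmetic difference is that you pass through the equality of classes \([\rho_n(a_n)]_{\cU}=[\widetilde\rho_n(a_n)]_{\cU}\). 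The paper instead writes out the explicit estimate \(d_H(\rho_n(a)\alpha_n(s),\alpha_n(s)\rho_n(a))\leq 2d_H(\rho_n(a),\widetilde\rho_n(a))+d_H(\widetilde\rho_n(a)\alpha_n(s),\alpha_n(s)\widetilde\rho_n(a))\).
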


\begin{proof}
By Proposition~\ref{prop:metric-groupoid-recovery}\textup{(a)},
\[
        \operatorname{def}_n
        :=
        \sup_{a,b\in F_n}
        d_H\bigl(
        \widetilde\rho_n(ab),
        \widetilde\rho_n(a)\widetilde\rho_n(b)
        \bigr)
        \leq q_n+r_n
        \to_{\cU}0.
\]
Applying the
Becker--Chapman theorem stated above to
\(\widetilde\rho_n:F_n\to\Sym(X_n)\) gives a finite set
\(Y_n\supseteq X_n\) and a homomorphism
\(\rho_n:F_n\to\Sym(Y_n)\)
such that, extending \(\widetilde\rho_n(a)\) by the identity on
\(Y_n\setminus X_n\),
$
        \sup_{a\in F_n}
        d_H(\rho_n(a),\widetilde\rho_n(a))\to_{\cU}0
$
and
\[
        \frac{|Y_n\setminus X_n|}{|Y_n|}\to_{\cU}0.
\]

Extend the \(F\)-model \(\alpha_n\) to \(Y_n\) by fixing the added points.  For
\(a\in F_n\) and \(s\in S\),
\[
\begin{aligned}
        d_H(\rho_n(a)\alpha_n(s),\alpha_n(s)\rho_n(a))
        &\leq
        2d_H(\rho_n(a),\widetilde\rho_n(a)) 
        +
        d_H(\widetilde\rho_n(a)\alpha_n(s),
        \alpha_n(s)\widetilde\rho_n(a)).
\end{aligned}
\]
Taking the supremum over \(a\in F_n\), summing over \(s\in S\), and using
Proposition~\ref{prop:metric-groupoid-recovery}\textup{(b)}, we get
\[
        \sup_{a\in F_n}
        \sum_{s\in S}
        d_H(\rho_n(a)\alpha_n(s),\alpha_n(s)\rho_n(a))
        \to_{\cU}0.
\]
Thus
\(\prod_{\cU}\rho_n(F_n)\subseteq C(\pi(G)).\)

Conversely, let \(\sigma=[\sigma_n]_{\cU}\in C(\pi(G))\) be represented by
permutations of \(Y_n\).  Since \(Y_n\setminus X_n\) has vanishing measure,
changing \(\sigma_n\) on
\((Y_n\setminus X_n)\cup\sigma_n^{-1}(Y_n\setminus X_n)\)
changes \(\sigma_n\) on at most \(2|Y_n\setminus X_n|\) points and gives an
equivalent representative preserving \(X_n\).  After changing \(\sigma_n\)
on the added points once more, we may also assume
that it is the identity on \(Y_n\setminus X_n\).  Restricting to \(X_n\),
Proposition~\ref{prop:metric-groupoid-recovery}\textup{(c)} gives
\(a_n\in F_n\) such that
$
        d_H\bigl(\sigma_n,\widetilde\rho_n(a_n)\bigr)
        \to_{\cU}0.
$
Since \(\rho_n(a_n)\) is uniformly close to \(\widetilde\rho_n(a_n)\), we get
\(d_H(\sigma_n,\rho_n(a_n))\to_{\cU}0.\)
Hence \(\sigma\in\prod_{\cU}\rho_n(F_n)\); this proves equality.
Because \(|Y_n\setminus X_n|/|Y_n|\to_{\cU}0\), enlarging from \(X_n\) to
\(Y_n\) does not change the represented ultraproduct element or the associated
Loeb action.
\end{proof}

\begin{proof}[Proof of Theorem~\ref{thm:groupoid-recovery}]
Start with the maps
\(\alpha_n:F_S\to\Sym(X_n)\) of the sofic embedding \(\pi\).  First
replace \(X_n\) by a union of expander components supplied by
Kun's expander decomposition; this removes a set of measure tending to \(0\)
along \(\cU\).  Next, Lemmas~\ref{lem:good-components} and
\ref{lem:cluster-scales} give component sets \(J_n\) with complement weight
\(\beta_n\to_{\cU}0\).  Replace \(X_n\) by
\(\bigsqcup_{i\in J_n}X_{n,i}.\)
After this passage within the essential-equivalence class, all components are good and the cluster groupoids
$
        \cC_n\rightrightarrows I_n
$
are defined by Definition~\ref{def:cluster-groupoid}.  They are finite
groupoids by Lemma~\ref{lem:cluster-groupoid}.

Proposition~\ref{prop:metric-groupoid-recovery} turns the finite full groups
\(F_n=[[\cC_n]]\) into uniform approximate permutation actions which
asymptotically commute with the \(F_S\)-model and exhaust the centralizer.
Proposition~\ref{prop:exactification} then replaces \(X_n\) by supersets
\(Y_n\supseteq X_n\), with added measure tending to \(0\), and gives genuine
homomorphisms
\(\rho_n:F_n\to\Sym(Y_n)\)
such that in this new sofic approximation $\pi':G\to\prod_{\cU}\Sym(Y_n)$ we have
\(C(\pi'(G))=\prod_{\cU}\rho_n(F_n).\)
Setting
\(
        A_n=\rho_n(F_n)\leq\Sym(Y_n)
\)
finishes the proof.
\end{proof}

We are now ready to prove the main theorem.

\begin{proof}[Proof of Theorem~\ref{thm:main}]
Let \(G\) be finitely generated and Kazhdan, and let
\(\pi:G\to \prod_{\cU}\Sym(X_n)\) be a sofic embedding whose centralizer acts
ergodically on the associated Loeb probability space.

By Theorem~\ref{thm:groupoid-recovery}, applied to the expander component
decomposition of finite free-group representatives of \(\pi\), there exist an
equivalent sofic embedding
\(\pi':G\to \prod_{\cU}\Sym(Y_n)\), obtained from \(\pi\) by the
replacements listed in Theorem~\ref{thm:groupoid-recovery}, and finite permutation
groups \(A_n\leq \Sym(Y_n)\) such that
$
        C_{\cS_{\mathcal U}}(\pi'(G))=\prod_{\cU}A_n
$
inside \(\prod_{\cU}\Sym(Y_n)\).

By Lemma~\ref{lem:null-modification}\textup{(ii)}, each of these replacements
induces an isometric isomorphism of the ambient metric ultraproducts which
carries \(\pi(G)\) to \(\pi'(G)\), carries centralizer to centralizer, and
identifies the associated Loeb probability spaces equivariantly modulo null
sets. Hence the centralizer of \(\pi'(G)\) still acts ergodically. Therefore
Proposition~\ref{prop:algebraic-recovery-rf}, applied to \(\pi'\), gives that
\(G\) is LEF. If \(G\) is finitely presented, the final assertion of
Proposition~\ref{prop:algebraic-recovery-rf} gives that \(G\) is residually
finite.
\end{proof}

Theorem~\ref{thm:main} shows that the Hayes--Kunnawalkam Elayavalli
conjecture has the following consequence.

\begin{corollary}
If a finitely generated Kazhdan sofic group admits an embedding as in the
Hayes--Kunnawalkam Elayavalli conjecture, then it is LEF.  In particular,
every such finitely presented group is residually finite.
\end{corollary}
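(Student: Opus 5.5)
This corollary is essentially a restatement of Theorem~\ref{thm:main}, so the plan is to deduce it directly from that theorem. Let \(G\) be a finitely generated Kazhdan sofic group, and suppose it admits an embedding as in the Hayes--Kunnawalkam Elayavalli conjecture. That is, there is a sofic embedding \(\pi:G\to\cS_{\cU}\) into a universal sofic group such that \(C_{\cS_{\cU}}(\pi(G))\) acts ergodically on the associated Loeb probability space \((X,\mu)\). This is verbatim the hypothesis of Theorem~\ref{thm:main}.

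First I would check that nothing is lost in matching the hypotheses. The conjecture's embedding lands in \(\prod_{\cU}\Sym(n)\), and Theorem~\ref{thm:main} is stated for the same kind of target. Section~\ref{sec:groupoid} works with \(\prod_{\cU}\Sym(X_n)\) for arbitrary finite sets \(X_n\). Relabelling \(X_n\) by \(\{1,\dots,|X_n|\}\) identifies these ambient groups; cardinalities indexed by \(n\) versus \(|X_n|\) do not matter. This relabelling preserves the Hamming metrics, the Loeb measure, and the centralizer. Finite generation, which the proof of Proposition~\ref{prop:algebraic-recovery-rf} and the construction in Section~\ref{sec:groupoid} use through the fixed generating set \(S\), is part of the corollary's hypothesis. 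Kazhdan groups are in any case finitely generated.

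Next I invoke Theorem~\ref{thm:main} to conclude that \(G\) is LEF. For the second sentence, suppose \(G\) is finitely presented. The last clause of Theorem~\ref{thm:main}, which traces back to Proposition~\ref{prop:algebraic-recovery-rf}, gives residual finiteness. If one wants that implication spelled out: a finitely presented LEF group is residually finite, because a local embedding of a ball containing all relators and a given nontrivial element into a finite group \(K\) extends to a homomorphism \(G\to K\) that does not kill that element.

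There is no real obstacle here. The only point needing care is the bookkeeping above: the ergodicity hypothesis in the conjecture refers to the same Loeb space and the same centralizer as in Theorem~\ref{thm:main}. That is immediate once the ambient ultraproducts are identified.
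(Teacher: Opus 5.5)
Your proposal is correct and matches the paper's proof, which simply notes that the assumed embedding has ergodic centralizer, so Theorem~\ref{thm:main} applies directly. The relabelling bookkeeping is harmless but unnecessary, since Theorem~\ref{thm:main} is stated for exactly the target \(\cS_{\cU}=\prod_{\cU}\Sym(n)\) appearing in the conjecture, and your argument that finitely presented LEF groups are residually finite is the standard one.
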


\begin{proof}
The assumed embedding has ergodic centralizer, so
Theorem~\ref{thm:main} applies.
\end{proof}

This conclusion for finitely presented Kazhdan groups is
compatible with the known examples, in the sense that it neither contradicts
the conjecture above nor proves the existence of a non-sofic group.

Let's review some of the known examples:
First, the second author constructed a Kazhdan group which is locally
embeddable into finite groups, hence sofic, but not residually finite
\cite{ThomHyperlinear}.
Since a discrete Kazhdan group is finitely generated, this gives a finitely
generated sofic Kazhdan group which is not residually finite.  It is not
finitely presented: every finitely presented LEF group is residually finite,
because the finite presentation turns sufficiently good local embeddings into
genuine finite homomorphisms.
Second, de Cornulier constructed finitely presentable non-Hopfian Kazhdan
groups \cite{deCornulier}.  Since finitely generated residually finite groups
are Hopfian, these groups are not residually finite.  Related examples are
known to be hyperlinear by work of the second author \cite{ThomHyperlinear},
but they are not known to be sofic. 
Third, Kar and Nikolov constructed finitely presented sofic groups which are
not residually finite \cite{KarNikolov}.  These
examples do not have Kazhdan's property.

\section{Open problems}

We formulate the following two open problems.

\begin{openproblem}
Are there examples of finitely presented sofic groups with Kazhdan's property~\((T)\) that are not residually finite?
\end{openproblem}

\begin{openproblem}
Let \(G\) be a Kazhdan group and let $S$ be a finite symmetric generating set.  Let
$
\pi\colon G \longrightarrow
\mathcal U\!\left(
  \prod_{\cU}
  M_{d_n}(\mathbb C)
\right)
$
be a homomorphism into the unitary group of a tracial ultraproduct of matrix algebras. 
\begin{enumerate}
\item[(a)] After replacing $M_{d_n}(\mathbb C)$ by $M_{m_n}(\mathbb C)$ for a suitable sequence $(m_n)_n$ with
\(m_n/d_n\to_{\cU}1\), must there exist finite-dimensional 
$*$-subalgebras \(A_n\subseteq M_{m_n}(\mathbb C)\) such that,
after the canonical identification $\prod_{\cU} M_{d_n}(\mathbb C) = \prod_{\cU} M_{m_n}(\mathbb C)$, we have
\[
{\pi}(G)'
\cap \prod_{\cU} M_{m_n}(\mathbb C)
=
\prod_{\cU} A_n?
\]
\item[(b)]  Can this be done by finding lifts of $\pi(s)$ for $s \in S$ to $(g_{s,n})_{n} \in \prod_{\mathbb N} U(m_n)$ such that $A_n$ can be defined as the centralizer of $\{g_{s,n} : s \in S\}$ in $M_{m_n}(\mathbb C)$?
\end{enumerate}
\end{openproblem}

\section*{Acknowledgements}

AI assistants (ChatGPT by OpenAI and Claude by Anthropic) were used to assist
with drafting, editing, and checking parts of preliminary versions of this manuscript. All
mathematical content was reviewed, checked, and substantially revised by the
authors, who are responsible for the final text.

\end{document}